\newtheorem{theorem}{Theorem}[section]
\newtheorem{remark}{Remark}[section]
\newtheorem{lemma}{Lemma}[section]
\newtheorem{definition}{Definition}[section]
\numberwithin{equation}{section}
\numberwithin{table}{section}
\newcommand{\bV}{{\boldsymbol{V}}}
\renewcommand{\div}{{\nabla \cdot}}
\newcommand{\eps}{\varepsilon}
\newcommand{\bb}{{\boldsymbol{b}}}
\newcommand{\bn}{{\boldsymbol{n}}}
\newcommand{\bq}{{\boldsymbol{q}}}
\newcommand{\bpsi}{{\boldsymbol{\psi}}}
\newcommand{\bone}{{\boldsymbol{I}}}
\newcommand{\TTh}{\mathscr{T}_h}
\newcommand{\pTh}{\partial\mathscr{T}_h}
\def\form(#1,#2){\left(#1,#2\right)}
\def\forme(#1,#2){\left<#1,#2\right>}
\newcommand{\norm}[1]{\ensuremath{\lVert{#1} \rVert}}
\def\jump#1{\left[\hskip -3.5pt\left[#1\right]\hskip -3.5pt\right]}
\def\normmm(#1){|||{#1}|||}
\newcommand{\wgrad}{\nabla_w}
\newcommand{\mwgrad}{\nabla_{w,k}}
\newcommand{\wdiv}{\nabla_{w}  \cdot}
\newcommand{\mwdiv}{\nabla_{w,k}  \cdot}
\newcommand{\eh}{e_h}
\newcommand{\uh}{u_h}
\newcommand{\uhb}{u_{hb}}
\newcommand{\uhz}{u_{h0}}
\newcommand{\vh}{v_h}
\newcommand{\vhb}{v_{hb}}
\newcommand{\vhz}{v_{h0}}
\newcommand{\wh}{w_h}
\newcommand{\whz}{w_{h0}}
\newcommand{\whb}{w_{hb}}
\newcommand{\Rh}{R_h}
\newcommand{\vz}{v_0}
\newcommand{\bv}{{\boldsymbol{v}}}
\newcommand{\bvz}{{\boldsymbol{v_0}}}
\newcommand{\bvb}{{\boldsymbol{v_b}}}
\newcommand{\half}{\frac{1}{2}}
\newcommand{\vb}{v_{b}}
\newcommand{\ehz}{e_{h0}}
\newcommand{\ah}{a_h}
\newcommand{\vt}{v_{\theta}}
\newcommand{\wt}{w_{\theta}}
\newcommand{\tp}{\tau^+}
\def\div{\nabla\cdot}%
\renewcommand{\div}{{\operatorname{div}}}
\DeclareMathAlphabet{\mathpzc}{OT1}{pzc}{m}{it}
\def\jump#1{\left[\hskip -3.5pt\left[#1\right]\hskip -3.5pt\right]}
\newcommand{\Rmnum}[1]{\expandafter\@slowromancap\romannumeral #1@}
\def\form(#1,#2){\left(#1,#2\right)}
\def\forme(#1,#2){\left<#1,#2\right>}
\def\normmm(#1){|||{#1}|||}
\journal{}
\begin{document}

\begin{frontmatter}
\title 
{Robust a-posteriori error estimates for weak Galerkin method for the convection-diffusion problem }



%


\author[mymainaddress]{Natasha Sharma\corref{mycorrespondingauthor}}
\cortext[mycorrespondingauthor]{Corresponding author}
\ead{nssharma@utep.edu}

\address[mymainaddress]{Department of Mathematical Sciences, University of Texas of El Paso, El Paso, TX 79968}

\begin{abstract}
We present a robust a posteriori error estimator for the weak Galerkin finite element method applied to stationary convection-diffusion equations in the convection-dominated regime. The estimator provides 
global upper and lower bounds of the error 
 and is robust in the sense that upper and lower bounds are uniformly bounded with respect to the diffusion coefficient. Results of the numerical experiments are presented to illustrate the performance of the error estimator. 
\end{abstract}

\begin{keyword}
weak Galerkin \sep finite element methods\sep discrete weak gradient\sep  \emph{a posteriori} error estimates \sep convection-diffusion equation\sep adaptive mesh refinement
\end{keyword}

\end{frontmatter}

\linenumbers
\section{Introduction}\label{sec:Intro}
We consider the following convection-diffusion equations as our model problem
\begin{subequations}\label{eq:cd}
\begin{align}
  - \div \big(\eps \nabla u\big) + \div\big( \bb  u  \big) + a u&= f \, \text{ in } \Omega ,  
\label{eq:cd1} \\
  u &= 0\, \, \text{ on } \Gamma, \label{eq:cd2}
\end{align}
\end{subequations}
 where $\Omega$ is a polygonal domain in $\mathbb{R}^2$ with boundary $\Gamma$ and where the data of the problem~\eqref{eq:cd} and the right-hand side in~\eqref{eq:cd1} satisfy the following assumptions:
\begin{itemize}
\item[A1.]{$\eps>0$. 
}
\item[A2.]{ $f \in L^2(\Omega)$, $\bb \in W^{1,\infty}(\Omega)^2$, $a \in L^{\infty}(\Omega)$.}
\item[A3.]{ There exists a positive constant $c_0$ 
such that
\begin{align}
0<c_0  \le \frac{1}{2} \div \bb + a \quad \text{on }\Omega.\label{convection_bd}
\end{align}
}
\end{itemize}
One of the challenges in the numerical approximations to~\eqref{eq:cd} is that when the problem is convection-dominated, the solutions to these problems possess layers of small width. In the presence of these layers, the solutions and/or their gradients change rapidly and as a consequence, standard finite element methods give inaccurate approximations unless the mesh size is fine enough to capture the layers.

In response to this challenge, several numerical approaches have been proposed over the years including stabilized methods~\cite{brooks1982streamline, hughes1979finite,brezzi1999priori, brezzi2000residual, burman2005stabilized}, discontinuous Galerkin (DG) methods~\cite{cockburn1999some, houston2002discontinuous, ayuso2009discontinuous, cockburn1999discontinuous, zarin2005interior, becker2000discontinuous, buffa2006analysis,schotzau2009robust, zhu2011robust, chen2016robust} and most recently, the weak Galerkin (WG) methods~\cite{CHEN2017107, LinYeZhaZhu18}. In particular, the WG methods use discontinuous approximations and have gained popularity owing to their attractive properties such as mass conservation, flexibility of geometry, flexibility on the choice of approximating functions~\cite{WangYe13}. It is interesting to note that over and above the advantages enjoyed by WG methods, the additional advantages of the WG scheme proposed in~\cite{LinYeZhaZhu18} is that it assumes a simple form and does not require any strict assumptions on the convection coefficient, a requirement which is crucial for several of the existing methods such as~\cite{ayuso2009discontinuous,schotzau2009robust, zhu2011robust, CHEN2017107}. Additionally, this simple WG scheme converges at the rate of $\mathcal{O}(h^{k+1/2})$ in the strongly advective regime, $k$ here denoting the polynomial order.

The motivation of our paper stems from the fact that despite the advantages of this simple WG scheme, it exhibits a poor performance in the intermediate regime see~\citep[Example 2]{LinYeZhaZhu18}, for instance. Our goal in this paper is to present a posteriori error analysis for the simple WG method proposed in~\cite{LinYeZhaZhu18}. We further demonstrate that by relying on adaptively refined meshes based on a posteriori residual-type estimator, we can retrieve the optimal order of convergence for all the regimes not just the convection-dominated regime. Additionally, we also prove that this estimator provides global upper and lower bounds of the error and is robust in the sense that upper and lower bounds are uniformly bounded with respect to the diffusion coefficient.

Existing literature for convection-diffusion problems solved using adaptive finite element methods based on a posteriori error estimates was initialized by Eriksson and Johnson in~\cite{eriksson1993adaptive} and enriched by several works authored and co-authored by Verf\"urth in~\cite{verfurth2005robust,verfurth1998posteriori,tobiska2015robust}. Within the DG framework, a posteriori estimates were developed by Sch\"otzau and Zhu in~\cite{schotzau2009robust,zhu2011robust}, and by Ern and co-authors in~\cite{ern2008posteriori,ern2010guaranteed}. In~\cite{chen2016robust}, a posteriori analysis using the hybridizable DG (HDG) method was presented and the error analysis relies on the extra conditions imposed on the convection coefficient. Furthermore, the proof of the local efficiency of the error estimator explicitly imposes a condition on mesh size to prove local efficiency see~\citep[Lemma 5.3]{chen2016robust}.

The WG method was originally introduced in~\cite{WangYe13} by Wang and Ye as a class of finite element methods that employ weakly defined differential operators to discretize partial differential equations without demanding any fine tuning of parameters in its weak formulation. While much attention has been paid to developing weak Galerkin methods for a wide class of problems~\cite{LiWang13, WangYe16, MuWangYe14b, MuWangYeZhang15, MuWangYe15,MuWangWangYe13, MuWangYe14a, WangWang14, WangWang15}, there is not much development in the direction of adaptive WG methods and most of the existing a posteriori error analysis is derived for either the second-order elliptic equations~\cite{Chen2014, li2019posteriori,ZhangTieChen18,AdlerHuYe2018} or for the Stokes problem~\cite{zheng2017posteriori}.

The paper is organized as follows. 
Section \ref{sec:wg} develops the weak Galerkin finite element method for the model problem. Section \ref{sec:posteriori} derivation of the a posteriori error analysis is presented. Section \ref{sec:numerical} demonstrates the effectiveness of our method  with results of  numerical experiments.
\section{Weak Galerkin Formulation}\label{sec:wg}
 Let $\{\TTh\}_{h>0}$ be a uniformly shape-regular partition of
 $\Omega$ into rectangular or triangular cells. For any $T \in \TTh$, let  $h_T$ denote the diameter of $T$, $h=\max_{T\in \TTh}h_T$, the mesh size of $\TTh$ and $\partial T$ denote the boundary of T. 

 Let $\mathscr{E}_h$ and $\bar{\mathscr{E}_h}$ denote the collection of all the interior edges and all the edges associated with the triangulation $\TTh$ respectively.
  Also, for any $E\in\bar{\mathscr{E}_h}$, let $h_E$ denote the length of $E$.
   We assume that there exists a constant $\kappa>0$ such that for each $T\in\TTh$, we have
   \begin{align}\label{shape-reg}
   h_T \le \kappa h_E
   \end{align}

 We also introduce the shorthand notation for broken inner products over
 $\TTh$ and $\pTh$ respectively as
 \begin{align*}
   \form(f,g)_{\TTh} &:= \sum_{T\in\TTh}\form(f,g)_T= \sum_{T\in\TTh}\int\limits_{T} f g\,  dx,  \\
   \forme(f,g)_{\pTh} &:= \sum_{T\in\TTh}\forme(f,g)_{\partial T}
  = \sum_{T\in\TTh} \ \int\limits_{\partial T} f \ g dx.
 \end{align*}
 In the above, we have used the shorthand $\partial T$ to denote the boundary of each cell $T\in \TTh$.
 The notation $\norm{\cdot}_S$ denotes the $L^2$ norm over any $S$ belonging to $\TTh$ or $\mathscr{E}_h$.       
  Here and in the sequel, we employ the standard notation for well-known Lebesque and Sobolev spaces and norms defined on them (cf., e.g,~\cite[Section 1.2]{ciarlet2002finite}). Throughout this paper, we use the symbol $\lesssim$ to denote bounds involving positive constants independent of the local mesh size and $\eps$.
  
 \subsection{Weak Differential Operators}
 Following~\cite{WangYe13}, we introduce the definitions of the weak gradient and divergence operators. For any $T\in \TTh$, we 
              let $V(T)$, denote the space of weak functions on T as
                  \begin{align*}
                      V (T)&:=\Big\{ v = \{ \vz, \vb\}: \vz \in L^2(T), \vb \in H^{\half}(\partial T),                       \Big\}
                         \end{align*}
                      and define the weak gradient as follows.
                  \begin{definition}[Weak Gradient] 
                  Given $ v\in V(T),$\\
                  $$(\wgrad v, \bq)_T:=- \int\limits_T\vz \div \bq \ dx + \int\limits_{\partial T}\vb \bn \cdot \bq \ ds \quad \forall \bq \in H(div;T),$$
                  where $\bn$ is the outward unit normal vector to $\partial T$ and 
                  \[
                  H(div;T):=\{\bq : \div \bq \in L^2(T)
                  \}.          
                  \]
                  \end{definition}

  On $T$, let $P_k(T)$ denote the space of all polynomials with degree no greater than $k$. For a given integer $k \ge 1$, let $V_{h,k}$ be the weak Galerkin finite element space corresponding to $\TTh$ defined as follows:
  \begin{align*}
  V_{h,k}:\{v=\{v_0,v_b\} \ : \ v_0|_T \in P_k(T), \ v_b|_{\partial T} \in P_k(\partial T), \ T\in\TTh\}, 
  \end{align*}
  and let $V_h^0$ be its subspace defined as:
   \begin{align*}
   V_{h,k
   }^0:\{v=\{v_0,v_b\}\in V_{h,k} \ : \ \ v_b=0, \text{ on } \partial \Omega\}. 
   \end{align*}
   
   We introduce the definition of the discrete weak gradient as described in~\cite{LinYeZhaZhu18}.
   \begin{definition}[Discrete Weak Gradient]
   For any $v=\{v_0,v_b\} \in V_{h,k}$ and for any $T \in \TTh$, 
        the discrete weak gradient $\mwgrad v \in  P_{k-1}(T)^2$ is defined on $T$
      as the unique polynomial satisfying
       \begin{equation}\label{def:wgrad}
         (\mwgrad v, \bpsi)_T = -(\vz, \div \bpsi)_T + \langle \vb, \bpsi \cdot \bn \rangle_{\partial T} \quad
         \forall \bpsi \in P_{k-1}(T)^2,
         \end{equation}
         where $\bn$ is the outward unit normal to $\partial T$.
   \end{definition}
We note that since the right-hand side of~\eqref{def:wgrad} defines a bounded linear functional on $H(div;T)$, 
        the definition of discrete weak gradient~\eqref{def:wgrad} is well defined for $\bpsi \in  H(div;T)$ as well.
       Thus, 
       \begin{align}\label{def:wgrad-discrete}
            (\mwgrad v, \bpsi)_T &= -(\vz, \div \bpsi)_T + \langle \vb, \bpsi \cdot \bn \rangle_{\partial T} \nonumber \\
            &=(\wgrad v, \bpsi)_T, \quad \bpsi \in H(div;T).     
        \end{align}

   Next, we introduce the weak divergence operator by first defining the space of weak vector-valued functions on $T$ as 
              \[
              \bV(T) := \Big\{  \{ \bvz, \bvb\}: \bvz \in L^2(T)^2, \bn \cdot\bvb \in H^{-\half}(\partial T) \Big\},
              \]
     so that the weak divergence operator can be defined as follows.
            \begin{definition}[Weak Divergence] For $\bv\in \bV(T)$,
              \[
              (\wdiv \bv, \psi)_T:= -(\bvz, \nabla \psi)_T + \langle \bvb \cdot \bn , \psi \rangle_{\partial T} \quad \psi \in H^1(T)
              \]
              where
              \[H^1(T):=\{v : \nabla v \in L^2(T)\} .\]
            \end{definition}
   Following~\cite{LinYeZhaZhu18}, the discrete weak divergence is defined as follows. 
   \begin{definition}[Discrete Weak Divergence]
      Given $v=\{\vz,\vb\} \in V_{h,k}$ and for any $T \in \TTh$, a discrete weak divergence $\mwdiv (\bb v) \in P_k(T) $ associated with $\bb$ can be defined as the unique polynomial satisfying 
            \begin{equation}\label{def:wdiv}
            (\mwdiv (\bb v), \psi)_T = -(\bb \vz, \nabla \psi)_T + \langle \bb \cdot \bn \vb , \psi \rangle_{\partial T} \quad
            \forall \psi \in P_k(T),
            \end{equation}
             where $\bn$ is the outward unit normal to $\partial T$.
                        
   \end{definition}  
 Notice that the above definition~\ref{def:wdiv} holds true for $\bpsi \in  H^1(T)$. This is because the right-hand side of~\eqref{def:wdiv} defines a bounded linear functional on $H^1(T)$.
         Thus for $ \psi \in H^1(T)$, we have
              \begin{equation}\label{def:wdiv-discrete}
                  (\mwdiv (\bb v), \psi)_T = -(\bb \vz, \nabla \psi)_T + \langle \bb \cdot \bn \vb , \psi \rangle_{\partial T}
                   =(\wdiv (\bb v), \psi)_T.      
                   \end{equation}   

In view of~\eqref{def:wgrad-discrete} and~\eqref{def:wdiv-discrete}, we drop the subscript ``$k $" from the discrete weak differential operator $\mwgrad$ and instead adopt the notation $\wgrad$ and $\wdiv$ whenever the context is clear.
                          
     \subsection{Weak Galerkin Method}                        For $\wh=\{\whz,\whb\}$ and $\vh=\{\vhz,\vhb\}$ in $V_{h,k}$, we define a bilinear form as 
                  \begin{align}\label{def:bilinear-form}
                    \ah(\wh,\vh):=&\eps (\wgrad \wh, \wgrad \vh)_{\TTh} + ( a \whz+ \wdiv (\bb \wh), \vhz)_{\TTh}\nonumber
                       \\
                           &
                           +
                           s(\wh,\vh),
                  \end{align}
                     where $  s(\wh,\vh)=\forme(\tp(\whz-\whb), \vhz -\vhb)_{\partial\TTh}$ with
                       $\tau^{+}$ defining the stabilization parameter defined as:
                       \begin{align*}
                                       \tau^{+}|_{\partial T}&=
                                       (\bb \cdot \bn) \ {{\bone}}_{{\partial T}^{+}} + \eps (\kappa h_T)^{-1} + (\norm{\bb}_{\infty}\eps^{-1} + 1)h_T 
                                       + \eps h_T^{-1},  \quad \forall T\in \TTh,    
                                       \end{align*}
                              with the constant $\kappa>0$ introduced in~\eqref{shape-reg} and
                                       \begin{align*}
                                       &I_{{\partial T}^+}(x)=\begin{cases}
                                       1 &\mbox{if } \bb(x) \cdot \bn(x)\ge 0,\\
                                       0 &\mbox{otherwise.} 
                                       \end{cases}
                                          \end{align*}
                                       A weak Galerkin approximation for ~\eqref{eq:cd} amounts to  
                                                      seeking $\uh = \{ \uhz, \uhb \} \in V_{h,k}^0$ satisfying the following equation:
                                                                       \begin{align}\label{eq:cd-wg}               
                                                                       a_h(u_h,v_h)= (f, \vhz) \quad &\forall v_h=\{\vhz,\vhb\} \in V_h^0.
                                                                       \end{align}

                                       We equip the space $V_h^0$ with the following weak Galerkin energy norm                           
                                              \begin{align}\label{eq:energy-norm}
                                              \normmm(v_h)_{}^2&:= \eps\norm{\mwgrad v_h}_{\TTh}^2
                                              + \norm{\vz}_{\TTh}^2
                                               + 
                                              \forme(\tau(\vhz-\vhb), \vhz -\vhb)_{\partial\TTh}.
                                              \end{align}
                                              where
                                               \begin{align*}  
                                              \tau|_{\partial T} =|\bb \cdot \bn|
                                              +
                          \big(\eps h_T^{-1} + \eps (\kappa h_T)^{-1} + 
                          \norm{\bb}_{\infty}\eps^{-1} + 1)h_T 
                          \big)                    
                                                \quad \forall T\in \partial\TTh.
                                              \end{align*}

         The present work modifies the WG method of Lin and co-authors~\citep[eq. (2.8)]{LinYeZhaZhu18} for solving the 
         convection-diffusion equation with an additional diffusion-dependent stability term $\forme((\eps \kappa h_T^{-1} + h_T)(\whz-\whb), \vhz -\vhb)_{\partial\TTh}$. The presence of this additional stability term enables us to bound several terms of the a posteriori error estimator.
         
          Associated with the solution $u$ satisfying~\eqref{eq:cd1}--\eqref{eq:cd2} and $u_h \in V_h$  satisfying~\eqref{eq:cd-wg}, we introduce the following weak function measuring the error
                                        \begin{align}
                                             e_h:=\{e_{h0}, e_{hb}\}= \{u_0 - u_{h0}, u_b - u_{hb}\},\label{err_rep}
                                             \end{align}
                where $u_0$ and $u_b$ are understood to be the restrictions of $u$ to the interior and the boundary of each $T\in \TTh$.                                            
            Furthermore, following the discussion in~\citep[Section 3]{WangYe13}, for any $T\in\TTh$, an application of Green formula reveals 
                                                \begin{align*}
                               (\nabla u, \bq)_T &= -(u,\div \bq)_T + \langle u,\bn \cdot \bq\rangle_{\partial T}\\
                                              &= -(u_0,\div \bq)_T + \langle u_b,\bn \cdot \bq\rangle_{\partial T}=(\wgrad u,\bq)_T, \quad \bq \in P_{m-1}(T)^2.
                                                \end{align*}                                    
                                                                                       
                             This allows us to measure the error in the energy norm:
                                    \begin{align}
                                    \normmm(u -\uh)^2 = \eps \norm{\wgrad u - \wgrad \uh}_{\TTh}^2 +
                                   \norm{\tau^{1/2}(\uhz-\uhb )}_{\partial\TTh}^2 + \norm{u-\uhz}_{\TTh}^2
                                   \label{def:norm_extension}
                                    \end{align}
  where $\wgrad \uh$ is understood to be the discrete weak gradient $\mwgrad u_h$ of $u_h\in V_{h,k}$ while $\wgrad u$ is the classical derivative of $u$.            

        The well-posedness of the WG formulation is guaranteed thanks to the following Lemma which appeared in of~\cite[Section 3.1]{LinYeZhaZhu18} and readily applies to our WG form.
       
          \begin{lemma}[Coercivity and Continuity]\label{lemma:well-posedness}
                     There exists constants $c_{1}$ and $c_{2}$ such that there holds
                     \begin{align}
                     \ah(w,v) &\le c_1 \normmm(w) \ \normmm(v) \quad \forall w, \ v \in V_h, \label{eq:cty}\\
                     c_2 \normmm(v)^2 &\le \ah(v,v),         \quad \forall  v \in V_h. \label{eq:coer}
                     \end{align}
                     \end{lemma}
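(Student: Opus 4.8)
The plan is to prove the two inequalities separately; \eqref{eq:coer} is where the form of the stabilisation weight $\tp$ is used essentially, while \eqref{eq:cty} is a routine term-by-term estimate. For the coercivity \eqref{eq:coer} I would put $w=v=v_h$ in \eqref{def:bilinear-form}: the diffusion term reproduces $\eps\norm{\wgrad v_h}_{\TTh}^2$ exactly and the stabilisation term reproduces $s(v_h,v_h)$, so the only real work is in the convection--reaction term $(a\vhz+\wdiv(\bb v_h),\vhz)_{\TTh}$. Testing \eqref{def:wdiv} with the admissible choice $\psi=\vhz\in P_k(T)$ and then integrating $(\bb\vhz,\nabla\vhz)_T$ by parts via $\bb\cdot\nabla(\vhz^2)=2\vhz\,\bb\cdot\nabla\vhz$, one rewrites this term cellwise as $\int_T(\tfrac12\div\bb+a)\vhz^2-\tfrac12\int_{\partial T}(\bb\cdot\bn)\vhz^2+\int_{\partial T}(\bb\cdot\bn)\vhb\vhz$. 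The elementary identity $-\tfrac12 c\,\vhz^2+c\,\vhb\vhz=\tfrac12 c\,\vhb^2-\tfrac12 c(\vhz-\vhb)^2$ turns the boundary part into $\tfrac12\int_{\partial T}(\bb\cdot\bn)\vhb^2-\tfrac12\int_{\partial T}(\bb\cdot\bn)(\vhz-\vhb)^2$; summing over $\TTh$, the $\vhb^2$ pieces cancel on interior edges ($\vhb$ single-valued, opposite normals) and vanish on $\Gamma$ since $\vhb=0$ there, which leaves $-\tfrac12\langle(\bb\cdot\bn)(\vhz-\vhb),\vhz-\vhb\rangle_{\pTh}$. Hence $\ah(v_h,v_h)=\eps\norm{\wgrad v_h}_{\TTh}^2+\sum_{T\in\TTh}\int_T(\tfrac12\div\bb+a)\vhz^2+\langle(\tp-\tfrac12\bb\cdot\bn)(\vhz-\vhb),\vhz-\vhb\rangle_{\pTh}$. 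The decisive pointwise fact is that, from the definition of $\tp$, on every $\partial T$ one has $(\bb\cdot\bn)I_{\partial T^+}-\tfrac12(\bb\cdot\bn)=\tfrac12|\bb\cdot\bn|$ irrespective of the sign of $\bb\cdot\bn$, so $\tp-\tfrac12(\bb\cdot\bn)=\tfrac12|\bb\cdot\bn|+\eps(\kappa h_T)^{-1}+(\norm{\bb}_\infty\eps^{-1}+1)h_T+\eps h_T^{-1}\ge\tfrac12\tau$. Together with assumption A3, which gives $\int_T(\tfrac12\div\bb+a)\vhz^2\ge c_0\norm{\vhz}_T^2$, this yields $\ah(v_h,v_h)\ge\eps\norm{\wgrad v_h}_{\TTh}^2+c_0\norm{\vhz}_{\TTh}^2+\tfrac12\langle\tau(\vhz-\vhb),\vhz-\vhb\rangle_{\pTh}\ge\min\{c_0,\tfrac12\}\,\normmm(v_h)^2$, i.e.\ \eqref{eq:coer} with $c_2=\min\{c_0,\tfrac12\}$.

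For the continuity \eqref{eq:cty} I would estimate the four pieces of $\ah(w_h,v_h)$ one at a time. Cauchy--Schwarz gives $\eps(\wgrad w_h,\wgrad v_h)_{\TTh}\le\normmm(w_h)\normmm(v_h)$ and $(a\whz,\vhz)_{\TTh}\le\norm{a}_\infty\normmm(w_h)\normmm(v_h)$, while the weighted Cauchy--Schwarz inequality together with the pointwise bound $0\le\tp\le\tau$ (valid because $(\bb\cdot\bn)I_{\partial T^+}\le|\bb\cdot\bn|$ and the remaining summands of $\tp$ and $\tau$ coincide) gives $s(w_h,v_h)\le\normmm(w_h)\normmm(v_h)$. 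Only the convection term needs more care: writing it through \eqref{def:wdiv} with $\psi=\vhz$ as $\sum_{T\in\TTh}\big(-(\bb\whz,\nabla\vhz)_T+\langle\bb\cdot\bn\,\whb,\vhz\rangle_{\partial T}\big)$, one controls the two cell contributions by Cauchy--Schwarz together with the inverse estimate $\norm{\nabla\vhz}_T\lesssim h_T^{-1}\norm{\vhz}_T$, the discrete trace estimate $\norm{\vhz}_{\partial T}\lesssim h_T^{-1/2}\norm{\vhz}_T$, the splitting $\norm{\whb}_{\partial T}\le\norm{\whz-\whb}_{\partial T}+\norm{\whz}_{\partial T}$, and the fact that the diffusion-weighted $\eps h_T^{-1}$ and $\eps(\kappa h_T)^{-1}$ summands built into $\tau$ let $h_T^{-1/2}\norm{\whz-\whb}_{\partial T}$ be absorbed into $\normmm(w_h)$. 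This yields $(\wdiv(\bb w_h),\vhz)_{\TTh}\lesssim\normmm(w_h)\normmm(v_h)$ with a constant that depends on $\eps$, on $\bb$ and on the mesh but is harmless for the well-posedness of \eqref{eq:cd-wg} on a fixed partition, and summing the four bounds gives \eqref{eq:cty}.

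The step I expect to be the main obstacle is the convection bookkeeping. In \eqref{eq:coer} it is the observation that the upwind indicator $(\bb\cdot\bn)I_{\partial T^+}$ in $\tp$ is tuned precisely so that, once integration by parts has produced the term $-\tfrac12(\bb\cdot\bn)$, the net boundary weight becomes $\tfrac12|\bb\cdot\bn|$ plus nonnegative diffusion terms and hence dominates $\tfrac12\tau$, rather than being sign-indefinite; the companion delicate point is the cancellation of $\sum_T\int_{\partial T}(\bb\cdot\bn)\vhb^2$, which hinges on $\vhb$ being single-valued on interior edges and zero on $\Gamma$. In \eqref{eq:cty}, since the convection term involves the genuine first-order operator $\wdiv(\bb\,\cdot\,)$ tested against the interior component $\vhz$, one is forced to pass through \eqref{def:wdiv} and pay inverse and trace constants; the point to keep straight there is simply that these affect only the (mesh- and $\eps$-dependent) constant $c_1$ and not the structure of the bound.
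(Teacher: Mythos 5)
The paper does not actually prove this lemma itself: its ``proof'' is a one-line citation to subsection 3.1 of Lin et al.\ \cite{LinYeZhaZhu18}, so your argument has to be judged against the standard WG well-posedness argument that such a reference contains. Your proof of \eqref{eq:coer} is exactly that argument and is correct: rewriting $(\wdiv(\bb v_h),\vhz)_{\TTh}$ through \eqref{def:wdiv} with $\psi=\vhz$, integrating $(\bb\vhz,\nabla\vhz)_T$ by parts, converting the boundary terms with the identity $-\tfrac12 c\,\vhz^2+c\,\vhb\vhz=\tfrac12 c\,\vhb^2-\tfrac12 c(\vhz-\vhb)^2$, cancelling the $\vhb^2$ contributions across interior edges, and using the pointwise identity $(\bb\cdot\bn)I_{\partial T^+}-\tfrac12\bb\cdot\bn=\tfrac12|\bb\cdot\bn|$ to conclude $\tp-\tfrac12\bb\cdot\bn\ge\tfrac12\tau$ is precisely the mechanism the upwind stabilization is designed for, and A3 then yields $c_2=\min\{c_0,\tfrac12\}$. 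One point to make explicit: the cancellation of $\sum_{T}\int_{\partial T}(\bb\cdot\bn)\vhb^2$ on boundary edges needs $\vhb=0$ on $\Gamma$, so \eqref{eq:coer} really holds on $V_{h,k}^0$ rather than on all of $V_{h,k}$ as the statement is (loosely) phrased; you use this hypothesis correctly but should state it.

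For \eqref{eq:cty} your term-by-term strategy is right, and the diffusion, reaction and stabilization bounds (the latter via $0\le\tp\le\tau$) are fine. The one place I would push back is your treatment of the convection term via the inverse inequality $\norm{\nabla\vhz}_T\lesssim h_T^{-1}\norm{\vhz}_T$ and the bare trace estimate for $\norm{\whz}_{\partial T}$: taken literally these make $c_1$ blow up like $h_T^{-1}$, which is weaker than intended --- Remark~\ref{rem:motivation} presumes $c_1$ depends on $\eps^{-1}$ but is otherwise uniform, and a mesh-dependent continuity constant is useless on a family of refined meshes. The standard repair for the volume part is to test \eqref{def:wgrad} with $\bpsi=\nabla\vhz\in P_{k-1}(T)^2$, which gives $\norm{\nabla\vhz}_T\le\norm{\wgrad v_h}_T+Ch_T^{-1/2}\norm{\vhz-\vhb}_{\partial T}$; both terms are then absorbed into $\eps^{-1/2}\normmm(v_h)$ using the $\eps h_T^{-1}$ summand built into $\tau$ --- the same absorption device you already invoke for the jump contribution. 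The edge term $\langle\bb\cdot\bn\,\whb,\vhz\rangle_{\partial T}$ needs the analogous care: after splitting off $\whz-\whb$ (whose $h_T^{-1/2}$ trace factor is paid for by the $\eps h_T^{-1}$ weight in $\tau$), the remaining single-valued piece should be integrated back onto $T$ so that the two $h_T^{\pm 1/2}$ factors cancel, rather than estimated by two one-sided trace inequalities. With those substitutions your argument delivers \eqref{eq:cty} with $c_1\lesssim\max\{1,\norm{a}_\infty,\norm{\bb}_{\infty}\eps^{-1/2}\}$ uniformly in $h$, consistent with the paper's remark.
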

                  \begin{proof}
                  The proof has been provided in subsection 3.1 of~\cite{LinYeZhaZhu18}.
                  \end{proof}  
                  
                  \begin{remark}\label{rem:motivation}
                  While the above Lemma~\ref{lemma:well-posedness} guarantees the well-posedness of the WG method~\eqref{eq:cd-wg}, the constant $c_1$ appearing in~\eqref{eq:cty} have an undesirable dependence on $\eps^{-1}$. This dependence arises when trying to control the term $(\wdiv(\bb u),w)$. Hence, our a posteriori error analysis cannot rely on the continuity property to control this term.
                   Instead, inspired by the strategy in~\cite{schotzau2009robust}, we control this term by
                including the following semi-norm $|\cdot|_*$ to our energy norm~\eqref{eq:energy-norm}.
                  \end{remark}

\begin{definition}[Operator] For any $\bq \in L^2(\Omega)^2$, we define the following semi-norm $|\cdot|_*$ as
\begin{align}\label{def:operator-norm}
|\bq|_*= \sup_{w\in H_0^1(\Omega)\setminus \{0\}}\frac{ \int_{\Omega}\bq \cdot \nabla w \ dx}{\normmm(w)}.
\end{align}
The definition above is well defined because $\bq$ admits the following Helmholtz decomposition
\begin{align}\label{eq:decomp-00}
\bq=\nabla \phi + \bq_0,
\end{align}
where $\phi \in H_0^1(\Omega)$ uniquely solves
\[
\int_{\Omega}\nabla \phi \cdot \nabla w \ dx = \int_{\Omega}\bq \cdot \nabla w \ dx \quad \forall w \in H_0^1(\Omega),
\]
and $\bq_0=\bq - \nabla \phi$ satisfies the divergence-free property 
\[
 \int_{\Omega}\bq_0 \cdot \nabla w \ dx = 0 \quad \forall w\in H_0^1(\Omega).
 \]
 The decomposition~\eqref{eq:decomp-00} is unique and orthogonal in $L^2(\Omega)^2$.
\end{definition}
Consequently, we have the following upper bound:\\
For $u_h=\{\uhz,\uhb\}\in V_{h,k}, \ v\in H^1_0(\Omega)$,  
  \begin{align}\label{eq:div-bd}
(\wdiv(\bb u_h),v)&\le |\bb \uhz|_* \normmm(v),
  \end{align}     
  where 
           $  \normmm(v)_{}^2 := \eps\norm{\nabla v}_{}^2 + \norm{v}_{}^2 $. Inequality~\eqref{eq:div-bd} holds because 
           \[
           (\wdiv(\bb \uh),v)_{\TTh}=-(\bb \uhz, \nabla v)_{\TTh}+\langle \bn \cdot \bb \uhb,v\rangle_{\pTh}
           \]
            and using the fact that $\langle  \bn \cdot \bb \uhb,v\rangle_{\pTh}=0$.\\ 
     \begin{remark}[Practical estimation of $|\bb (u-\uhz)|_*$]\label{rem:bound}
     Following the arguments presented in~\citep[Remark 3.5]{schotzau2009robust}, we use the following inequality 
     \[
     |\b(u-\uhz)|_* \lesssim \frac{1}{\sqrt{\eps}}\norm{u-\uhz}
     \]
     to serve as a computable upper bound for $|\bb (u -\uhz)|_*$.
  
     \end{remark}

   We close this section by proving the inf-sup condition for $\ah(\cdot,\cdot)$ on $H_0^1(\Omega)$. The proof follows the arguments presented in~\cite{schotzau2009robust}. However, since our definition of the energy norm $\normmm(u)+ |\bb u|_*$ differs from the one used in~\cite{schotzau2009robust}, we include the proof below for completeness.
                     \begin{theorem}[inf-sup condition] There exists a constant $C>0$ satisfying
                     \begin{align}\label{eq:inf-sup}
                     \inf_{u\in H_0^1(\Omega)\setminus\{0\}}\sup_{v\in H_0^1(\Omega)\setminus\{0\}}\frac{\ah(u,v)}{(\normmm(u)+|\bb u|_*)\normmm(v)}
                     \ge C>0.
                     \end{align}
                     \end{theorem}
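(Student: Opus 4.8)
The plan is to establish the inf-sup condition by the standard device of, for a given $u \in H_0^1(\Omega)\setminus\{0\}$, constructing an explicit test function $v$ as a linear combination of $u$ itself and a correction term that captures the convective part of the bilinear form. Recall that on $H_0^1(\Omega)$ the bilinear form reads $\ah(u,v) = \eps(\nabla u, \nabla v) + (au + \div(\bb u), v) = \eps(\nabla u,\nabla v) - (\bb u, \nabla v) + ((\tfrac12\div\bb + a)u,v) + \tfrac12(\div\bb\, u, v)$; after symmetrizing the convection term one gets $\ah(u,u) = \eps\norm{\nabla u}^2 + ((\tfrac12\div\bb + a)u,u) \geq \eps\norm{\nabla u}^2 + c_0\norm{u}^2$ by assumption A3, which already gives $\ah(u,u) \gtrsim \normmm(u)^2$. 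So the nontrivial point is to gain control of the extra semi-norm $|\bb u|_*$.

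First I would fix $u$ and, following \cite{schotzau2009robust}, let $\phi \in H_0^1(\Omega)$ be the solution of $(\nabla\phi, \nabla w) = -(\bb u, \nabla w)$ for all $w \in H_0^1(\Omega)$; by the definition \eqref{def:operator-norm} of $|\cdot|_*$ applied to $\bq = \bb u$ (note the divergence-free remainder contributes nothing), one has $\sqrt\eps\,\norm{\nabla\phi} \le |\bb u|_*$ as well as, from taking $w=\phi$, that $(\bb u,\nabla\phi) = -\norm{\nabla\phi}^2$ together with the sup-characterization $|\bb u|_*^2 \le C\big(\eps\norm{\nabla\phi}^2 + \norm{\phi}^2\big)$ — actually more care is needed here, so I would instead argue that testing \eqref{def:operator-norm} against $w=\phi$ gives $|\bb u|_* \,\normmm(\phi) \ge (\bb u,\nabla\phi)$... wait, the sup is over the quotient, so $(\bb u,\nabla\phi) \le |\bb u|_* \normmm(\phi)$, and since also $(\bb u, \nabla\phi)$ equals $-\norm{\nabla\phi}^2$ this is not directly what we want. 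The correct route (as in \cite[proof of inf-sup]{schotzau2009robust}) is: choose $w^*$ nearly attaining the sup defining $|\bb u|_*$, i.e. $(\bb u, \nabla w^*) \ge \tfrac12 |\bb u|_* \normmm(w^*)$ with $\normmm(w^*) = 1$, and then set $v = u + \delta w^*$ for a small parameter $\delta > 0$ to be chosen. Expanding $\ah(u, u + \delta w^*) = \ah(u,u) + \delta\,\ah(u, w^*)$ and noting that the dominant contribution of $\delta\,\ah(u,w^*)$ is $-\delta(\bb u, \nabla w^*)$ — but we want this to be \emph{positive}, so in fact one takes $v = u - \delta w^*$ so that $-\delta\ah(u,w^*) \supset +\delta(\bb u,\nabla w^*) \ge \tfrac\delta2 |\bb u|_*$.

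The key steps in order: (i) write out $\ah(u,u-\delta w^*) = \ah(u,u) - \delta\,\ah(u,w^*)$; (ii) lower bound $\ah(u,u) \ge \eps\norm{\nabla u}^2 + c_0\norm{u}^2 \gtrsim \normmm(u)^2$ using A3; (iii) for the cross term, use $-\delta\,\ah(u,w^*) = -\delta\big[\eps(\nabla u,\nabla w^*) + (au + \div(\bb u), w^*)\big]$; rewrite $(\div(\bb u),w^*) = -(\bb u,\nabla w^*)$ since $w^* \in H_0^1$, so $-\delta\,\ah(u,w^*) = \delta(\bb u,\nabla w^*) - \delta\big[\eps(\nabla u,\nabla w^*) + (au,w^*)\big] \ge \tfrac\delta2|\bb u|_* - \delta\big(\sqrt\eps\norm{\nabla u}\cdot\sqrt\eps\norm{\nabla w^*} + \norm{a}_\infty\norm{u}\norm{w^*}\big)$; (iv) bound $\sqrt\eps\norm{\nabla w^*} \le \normmm(w^*) = 1$ and $\norm{w^*} \le \normmm(w^*) = 1$, so the "bad" part is $\le \delta(\sqrt\eps\norm{\nabla u} + \norm{a}_\infty\norm{u}) \le C\delta\,\normmm(u)$; (v) apply Young's inequality $C\delta\,\normmm(u) \le \tfrac12 C_{\mathrm{coer}}\normmm(u)^2 + \tfrac{(C\delta)^2}{2C_{\mathrm{coer}}}$ — hmm, that constant doesn't carry a $\normmm(u)$, so instead absorb it as $C\delta\normmm(u) \le \tfrac12\normmm(u)^2 + \tfrac12 C^2\delta^2$; then choosing $\delta$ small makes the net bound $\ah(u,u-\delta w^*) \gtrsim \normmm(u)^2 + \delta|\bb u|_* - C^2\delta^2 \gtrsim \big(\normmm(u) + |\bb u|_*\big)^2$ after a further elementary manipulation using $\delta|\bb u|_* \gtrsim \delta^2|\bb u|_*^2$ only when $|\bb u|_* \lesssim 1$, which requires a homogeneity/scaling normalization: WLOG normalize so that $\normmm(u) + |\bb u|_* = 1$. (vi) Finally bound $\normmm(v) = \normmm(u - \delta w^*) \le \normmm(u) + \delta \le C$, and divide to conclude $\ah(u,v)/(\,(\normmm(u)+|\bb u|_*)\normmm(v)) \ge C > 0$.

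The main obstacle I anticipate is handling the interplay of the two parts of the target norm correctly: one must choose the coupling parameter $\delta$ \emph{independently of $\eps$} (this is the whole point of robustness), which forces careful tracking that every constant $C$ arising in steps (iii)–(iv) — particularly from $\norm{a}_\infty$, $\norm{\bb}_{W^{1,\infty}}$, and the Poincaré constant implicit in $\normmm(\cdot)$ — is $\eps$-independent, and that the $\eps(\nabla u,\nabla w^*)$ term is absorbed using the $\eps$-weighted pieces on both sides rather than naively. A secondary subtlety is that $w^*$ only \emph{nearly} attains the supremum, so all inequalities must be stated with the factor $\tfrac12$ (or $1-\eta$) slack; this is routine but must be done. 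Modulo these bookkeeping points, the argument is the classical Schötzau–Zhu inf-sup proof transplanted to the present norm, and no genuinely new idea is needed.
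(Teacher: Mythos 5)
Your proposal is correct and follows essentially the same route as the paper: a coercivity bound $\ah(u,u)\ge \min\{1,c_0\}\normmm(u)^2$ from assumption A3 combined with a test function $v=u+\alpha w_\theta$, where $w_\theta$ nearly attains the supremum defining $|\bb u|_*$, and the cross terms are absorbed with $\eps$-independent constants. The only (cosmetic) difference is in the bookkeeping: the paper takes $\alpha$ proportional to $\normmm(u)$, which makes the bad term a clean multiple of $\normmm(u)^2$ and avoids your Young-inequality-plus-normalization step, but both variants yield the same robust constant.
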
 
                     \begin{proof}
                     Let $u\in H_0^1(\Omega)$ and $0<\theta<1$. There exists $w_{\theta}\in H_0^1(\Omega)$ such that
                     \begin{align*}
                     \normmm(w_{\theta})=1, \quad (\wdiv(\bb u),w_{\theta}) \ge \ \theta |\bb u|_*.
                     \end{align*}
                     \begin{align*}
                     (\eps \nabla u,-\nabla w_{\theta} ) + (a u, -w_{\theta}) 
                     &\le \norm{\eps^{1/2}\nabla u} \
                     \norm{\eps^{1/2}\nabla w_{\theta}} + \| a\|_{\infty} \norm{u} \ \norm{w_{\theta}}\\
                     &\le \max\{1,\| a\|_{\infty} \} \ \normmm(u).
                     \end{align*}           
                     Thus,
                     \begin{align}\label{eq:lb1}
                     \ah(u,w_{\theta})&=  (\eps \nabla u,\nabla w_{\theta} ) + (a u, w_{\theta})
                       +(\wdiv(\bb u),w_{\theta}) \nonumber \\
                       &\ge-c_1 \ \normmm(u) \ + \theta |\bb u|_*,
                        \end{align}
                       where $c_1=\max\{1,\| a\|_{\infty} \}$. 
                       Noting that
                                          
                       Next, we define $\vt= u+\alpha \wt  $ where $\alpha>0$ is chosen suitably.
                       Using 
                        \begin{align}\label{eq:lb0}
                                                      \ah(u,u)&\ge \eps\norm{\wgrad u}^2+(c_0u,u) \nonumber\\
                                                      &\ge c_*\normmm(u)^2 \quad \text{where } c_*=\min\{1,c_0\}
                                                       \end{align}
                        and \eqref{eq:lb1}, we have
                       \begin{align}\label{eq:lb2}
                       \ah(u,\vt)&=  \ah(u,u) + \alpha\ah(u,\wt)
                       \nonumber\\
                       &\ge   c_*\normmm(u)^2 + \alpha \big(-c_1 \ \normmm(u) \ + \theta |\bb u|_*\big).
                       \end{align}
                    We pick $\alpha= c_*\normmm(u)/(c_1+1)$ so that,
                    \begin{align}
                    \ah(u,\vt)&\ge(c_1+1)^{-1}c_*\big(\normmm(u) +  \theta |\bb u|_*\big)\normmm(u) \quad \text {and,}
                    \nonumber
                    \\
                    \normmm(\vt)&=(c_1+1)^{-1}\{c_1+1+c_*\}\normmm(u)\quad 
                    \nonumber
                     \\
                     \text{ thus,}\quad
                     \frac{\ah(u,\vt)}{\normmm(\vt)}
                     &\ge 
                     \frac{c_*\big(\normmm(u) +  \theta |\bb u|_*\big)}{\{c_1+1+c_*\}}=C\big(\normmm(u) +  \theta |\bb u|_*\big)\nonumber
                    \end{align}   
                    where $C=c_*\{c_1+1+c_*\}^{-1}>0$.
                    
                    Consequently, for any $u\in H_0^1(\Omega), \ \theta\in(0,1)$, we have
                    \begin{align*}
                    C\big(\normmm(u) +  \theta |\bb u|_*\big) &\le
                     \frac{\ah(u,\vt)}{\normmm(\vt)} \le  \sup_{v\in H_0^1(\Omega)}\frac{\ah(u,v)}{(\normmm(u)+|\bb u|_*)\normmm(v)}
                    \end{align*}
                    from which, the result follows.
                     \end{proof}           
\section{A Posteriori Error Analysis} \label{sec:posteriori}
    In this section, we present a residual-based estimator and prove its reliability and efficiency. 
                        \begin{definition}[Estimator] We introduce the estimator in terms of
                               the cell and edge indicators defined as shown below:
                                 \begin{align}
                                 \eta_h^2:=\sum\limits_{T \in \TTh}\Big(\eta_{T,1}^2 + 
                                 \eta_{T,2}^2\Big)
                                  +                       
                                 \sum\limits_{E \in \mathscr{E}_h}\eta_{E}^2,\label{eq:estimator}
                                 \end{align}
                                 where the cell and edge residuals are 
                                  \begin{align}\label{def:estimator}
                                &\eta_{T,1}^2:= \alpha_T^2\norm{R_h}^2_{T}, \quad  \text{with} \ \Rh :=f_h+ \div (\eps \wgrad \uh) -\wdiv(\bb\ u_{h0}) - a_h \uhz,
                                 \nonumber\\
                                  &\eta_{T,2}^2:= \langle\tau(\uhz-\uhb),\uhz-\uhb\rangle_{\partial T},
                                \nonumber\\
                                &\eta_{E}^2:=\alpha_E \eps^{-1/2}\norm{J_h}_{E}^2,   \text{ with } J_h:=\jump{\eps\bn \cdot \wgrad u_h},
                                \end{align}
                               and the weights are according to~\citep[equation (3.4)]{verfurth2005robust}
                               \begin{align}\label{est:weights}
                               \alpha_T=\min\{h_T\eps^{-1/2},c_0^{-1/2}\},\ \text{ and } \ \alpha_E=\min\{h_E\eps^{-1/2},c_0^{-1/2}\}.
                               \end{align}
 We note that if $c_0 =0$, we define the weights as follows                              
    \begin{align}\label{est:weights2}
                                   \alpha_T=\min\{h_T\eps^{-1/2},1\},\ \text{ and } \ \alpha_E=\min\{h_E\eps^{-1/2},1\}.
                                   \end{align}                             

                                \end{definition}
  We also define the oscillation of data as
  \begin{align}
  \label{def:osc}
  osc(f, a)=\Big(\sum_{T\in\TTh}\alpha_T^2\big( \norm{f-f_h}_T^2 +\norm{(a-a_h)\uhz}_T^2\big)\Big)^{1/2}.
  \end{align}
                                
%
         \subsubsection{Reliability of Estimator}
         This subsection is devoted to proving the reliability for the estimator defined in~\eqref{eq:estimator}. The proof of the reliability relies on decomposing the discretization error into its conforming and non-conforming component and deriving upper bounds for each component. To this end, we first introduce a suitable conforming discrete space 
                                             $V_{h}^c= V_h\cap H_0^1(\Omega)$.
                                              
                    For any $v_h=\{\vhz,\vhb\}\in V_h$  we associate a conforming approximation $\vhz^c\in V_h^c$. Construction of such an approximation is a standard DG tool in the error analysis (see~\citep[Theorems 2.2 and 2.3]{karakashian2003posteriori} for instance). 
  This allows us to decompose $v_h $ as
                                                               \begin{align}\label{eq:decomp}
                                                               v_h=v_h^c + v_h^r
                                                               \end{align}
 where the nonconforming component $v_h^r=\{\vhz-v_h^c,\vhb-v_h^c\} \in V_h$. \\                      
 The above conforming approximation satisfies the following properties:
 \begin{subequations}\label{eq:dg-approx}
    \begin{align}
                                                   \norm{\vhz - v_h^c}_{\TTh}^2 \lesssim \sum_{E\in\mathscr{E}_h}\int_E h_E \jump{\vhz}^2 \ ds,\\
                                                    \norm{\nabla \vhz - \nabla v_h^c}_{\TTh}^2 \lesssim \sum_{E\in\mathscr{E}_h}\int_E h_E^{-1} \jump{\vhz}^2 \ ds .
             \end{align}      
 \end{subequations}
                                                
 Thanks to the single-valuedness of $\vhb$ over each edge $E\in \mathscr{E}_h$, the jump term in the rhs of~\eqref{eq:approx0} and~\eqref{eq:approx1} can be expressed as
                                                           \begin{align}
                                                            \sum_{E\in\mathscr{E}_h}\int_E \jump{\vhz}^2 \ ds
                                                           &=\sum_{E\in\mathscr{E}_h}\int_E  \jump{\vhz-\vhb}^2 \ ds
                                                           \nonumber
                                                           \\
                                                            &\le \sum_{T\in\mathscr{T}_h}\int_{\partial T} 2 (\vhz-\vhb)^2 \ ds .
                                                            \label{eq:jmp-bd}
                                                           \end{align}
Using~\eqref{eq:jmp-bd} in~\eqref{eq:dg-approx} and by using $\kappa h_T\le h_E,$ and $ h_E\le h_T$, we have                                                           
   \begin{align}
                                                 \norm{\vhz - v_h^c}_{\TTh}^2 &\lesssim  
                                                  \sum_{T\in\mathscr{T}_h}h_T\int_{\partial T}  (\vhz-\vhb)^2 \ ds
                                                 ,\label{eq:approx0}\\
        \norm{\nabla \vhz - \nabla v_h^c}_{\TTh}^2 &\lesssim    \sum_{T\in\mathscr{T}_h}(\kappa h_T)^{-1}\int_{\partial T}  (\vhz-\vhb)^2 \ ds \label{eq:approx1}.
                                                 \end{align}                           
The lemma below provides bounds for the non-conforming component.                                                           
     \begin{lemma}[Nonconforming term bound]
      \label{lemma-nconf}
                                                  For $\vh\in V_h$ admitting the decomposition~\eqref{eq:decomp} the following hold true:    
                                                 \begin{align}
                                                 \ah(\vh^r,w)
                                          &\lesssim 
                                          \max\{1,\norm{a}_{\infty},\norm{\bb}_{\infty} \} \
                                          \normmm(\vh^r)
                                         \ \normmm(w), \quad w\in H_0^1(\Omega),
                                          \label{eq:bound-non-conform} \\                                         
                                              \normmm(\vh^r)+|\bb \vhz^r|_*
             & \lesssim                                                                                                                             \sum_{T\in\TTh} (\eps h_T^{-1} + \eps (\kappa h_T)^{-1} + h_T)\norm{\vhz-\vhb}^2_{\partial T}.    \label{eq:bound-non-conform1}  
            \end{align}

                                                   \end{lemma}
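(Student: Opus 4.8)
\emph{Proof strategy.} The plan is to prove the two inequalities in turn, using throughout the identity $\vhz^r-\vhb^r=\vhz-\vhb$, valid because $\vh^r=\{\vhz-\vhc,\vhb-\vhc\}$ with $\vhc\in\Vhc$ continuous; in particular the stabilization contribution of $\vh^r$ coincides with that of $\vh$.

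\emph{Bound \eqref{eq:bound-non-conform}.} Since $w\in H_0^1(\Omega)$, the interior and boundary components of its weak-function lift coincide on every $\partial T$, so $s(\vh^r,w)=0$ and
\[
\ah(\vh^r,w)=\eps(\wgrad\vh^r,\wgrad w)_{\TTh}+(a\vhz^r,w)_{\TTh}+(\wdiv(\bb\vh^r),w)_{\TTh}.
\]
I would bound the three terms separately. Diffusion: by Cauchy--Schwarz and the fact that $\wgrad w$ is the $L^2(T)^2$-projection of $\nabla w$ onto $P_{k-1}(T)^2$ (the Green identity recorded just before \eqref{def:norm_extension}), hence $\norm{\wgrad w}_{\TTh}\le\norm{\nabla w}$ and $\eps(\wgrad\vh^r,\wgrad w)_{\TTh}\le\normmm(\vh^r)\normmm(w)$. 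Reaction: $(a\vhz^r,w)_{\TTh}\le\norm{a}_\infty\norm{\vhz^r}_{\TTh}\norm{w}\le\norm{a}_\infty\normmm(\vh^r)\normmm(w)$. Convection: by \eqref{def:wdiv-discrete} (admissible since $w\in H^1(T)$), $(\wdiv(\bb\vh^r),w)_T=-(\bb\vhz^r,\nabla w)_T+\forme(\bb\cdot\bn\,\vhb^r,w)_{\partial T}$; on summing over $T$ the boundary terms cancel, since $\vhb^r$ and $w$ are single-valued across interior edges and $w$ vanishes on $\partial\Omega$, exactly as in the remark following \eqref{eq:div-bd}, leaving $-(\bb\vhz^r,\nabla w)_{\TTh}\le|\bb\vhz^r|_*\normmm(w)$ by \eqref{eq:div-bd}. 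Adding the three estimates gives \eqref{eq:bound-non-conform}, with $|\bb\vhz^r|_*$ carried along as foreseen in Remark~\ref{rem:motivation}.

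\emph{Bound \eqref{eq:bound-non-conform1}.} Here I would estimate each piece of $\normmm(\vh^r)^2+|\bb\vhz^r|_*^2$ by an edge sum. By the identity above the stabilization part of $\normmm(\vh^r)^2$ equals $\forme(\tau(\vhz-\vhb),\vhz-\vhb)_{\pTh}=\sum_T\eta_{T,2}^2$, which already carries the weights $\eps h_T^{-1}$, $\eps(\kappa h_T)^{-1}$ and $h_T$. The term $\norm{\vhz^r}_{\TTh}^2$ is handled by \eqref{eq:approx0}. For $\eps\norm{\wgrad\vh^r}_{\TTh}^2$ I would test \eqref{def:wgrad} with $\bpsi=\wgrad\vh^r\in P_{k-1}(T)^2$, integrate the volume term by parts to rewrite $(\wgrad\vh^r,\wgrad\vh^r)_T$ as $(\nabla\vhz^r,\wgrad\vh^r)_T-\forme(\vhz-\vhb,\wgrad\vh^r\cdot\bn)_{\partial T}$, and apply the discrete trace inequality $\norm{\wgrad\vh^r\cdot\bn}_{\partial T}\lesssim h_T^{-1/2}\norm{\wgrad\vh^r}_T$ to obtain $\norm{\wgrad\vh^r}_T\lesssim\norm{\nabla\vhz^r}_T+h_T^{-1/2}\norm{\vhz-\vhb}_{\partial T}$; squaring, summing and invoking \eqref{eq:approx1} produces the $\eps(\kappa h_T)^{-1}$ and $\eps h_T^{-1}$ terms. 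Finally, from \eqref{def:operator-norm} and Cauchy--Schwarz, $|\bb\vhz^r|_*\le\norm{\bb}_\infty\eps^{-1/2}\norm{\vhz^r}_{\TTh}$, and \eqref{eq:approx0} turns this into an edge sum whose $\eps^{-1}h_T$ weight is precisely what the diffusion-dependent stability term in $\tau$ absorbs.

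\emph{Main obstacle.} The delicate point is the convection contribution. In \eqref{eq:bound-non-conform} the term $(\wdiv(\bb\vh^r),w)_{\TTh}$ admits no $\eps$-robust bound through the continuity estimate of Lemma~\ref{lemma:well-posedness} (that route costs a factor $\eps^{-1/2}$), which is why it must be passed through $|\bb\vhz^r|_*$ and \eqref{eq:div-bd}; and in \eqref{eq:bound-non-conform1} one must check that the $\eps^{-1}$-scaled bound for $|\bb\vhz^r|_*$ is dominated by the stabilization weight $\tau$, which works only because of the extra term built into $\tau$. The remaining ingredients — the discrete trace and inverse inequalities, and the bookkeeping with the shape-regularity relations $\kappa h_T\le h_E\le h_T$ and \eqref{eq:jmp-bd} — are routine.
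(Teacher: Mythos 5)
Your argument follows essentially the same route as the paper's. For \eqref{eq:bound-non-conform1} the two proofs are line-for-line the same: test \eqref{def:wgrad} with $\bpsi=\wgrad \vh^r$, integrate by parts, apply the discrete trace inequality to get $\norm{\wgrad \vh^r}_T\lesssim \norm{\nabla \vhz^r}_T+h_T^{-1/2}\norm{\vhz-\vhb}_{\partial T}$, then invoke \eqref{eq:approx0}--\eqref{eq:approx1} together with $|\bb \vhz^r|_*\le \norm{\bb}_{\infty}\eps^{-1/2}\norm{\vhz^r}_{\TTh}$. The one point of divergence is the convection term in \eqref{eq:bound-non-conform}: the paper bounds $-(\bb \vhz^r,\nabla w)_{\TTh}$ directly by $\norm{\bb}_{\infty}\eps^{-1/2}\norm{\vhz^r}_{\TTh}\,\normmm(w)$ and then absorbs $\eps^{-1/2}\norm{\vhz^r}_{\TTh}$ into $\normmm(\vh^r)$ using \eqref{eq:approx0} and the $\norm{\bb}_{\infty}\eps^{-1}h_T$ component of the stabilization weight $\tau$, whereas you route it through $|\bb \vhz^r|_*$ and declare it ``carried along.'' But \eqref{eq:bound-non-conform} as stated has only $\normmm(\vh^r)\,\normmm(w)$ on the right, so you still owe the one-line absorption $|\bb \vhz^r|_*\lesssim \max\{1,\norm{\bb}_{\infty}\}\,\normmm(\vh^r)$; this follows from exactly the chain you write at the end of your second part (the $\eps^{-1}h_T$-weighted edge sum is dominated by the stabilization part of $\normmm(\vh^r)^2$). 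With that line made explicit, your proof and the paper's coincide.
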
                                               
\begin{proof} To see~\eqref{eq:bound-non-conform}, we apply the definitions of the WG form and weak divergence    
  \begin{align*}
                                                 \ah(\vh^r,w)&=(\eps \wgrad \vh^r, \nabla w)_{\TTh} - (\bb \vhz^r, \nabla w)_{\TTh} + 
                                                       (a \vhz^r,  w)_{\TTh} + 
                                                              \langle \bn\cdot \bb \vhb^r, w\rangle_{\pTh}\nonumber\\
 &\le \Big(\normmm(\vh^r) + \norm{\bb}_{\infty} \eps^{-1/2}\norm{\vhz^r}
                                            + \norm{a}_{\infty} \norm{\vhz^r}
                                           \Big)\normmm(w),\\                                                             
 &\le\max\{1,\norm{a}_{\infty},\norm{\bb}_{\infty} \}\normmm(\vh^r) \ \normmm(w)
 , \quad w\in H_0^1(\Omega).
   \end{align*} 
 Here we have used~\eqref{eq:approx0} and $  \langle \bn\cdot \bb \uhb^r, w\rangle_{\pTh}=0$.                                                  
                 
Next, for deriving~\eqref{eq:bound-non-conform1}, we use the definition~\eqref{def:wgrad} of the weak gradient in conjunction with the trace, inverse and Cauchy-Schwarz inequalities and~\eqref{eq:approx0} to obtain                                              
                                         \begin{align}
                                                 \norm{\wgrad \vh^r}_T^2 &=(\nabla \vhz^r, \wgrad \vh^r)_T -\langle \vhz^r-\vb^r, \bn \cdot \wgrad \vh^r\rangle_{\partial T}
                                                 \nonumber
                                                 \\
                                                 &\le \Big( \norm{\nabla \vhz^r}_T + Ch_T^{-1/2}\norm{\vhz^r-\vhb^r}_{\partial T}\Big) \  \norm{\wgrad \vh^r}_{T}
                                                 \nonumber
                                                 \\        
                                                          \eps^{1/2} \norm{\wgrad \vh^r}_{T} 
                                                          &\le \min\{1, C\}
                                                            \Big(  \eps\norm{\nabla \vhz^r}^2_{T} +   \eps h_T^{-1}\norm{\vhz-\vhb}^2_{\partial T}\Big)^{1/2} 
                                                            \nonumber
                                                            \\
    \text{Thus, summing }& \text{over all } T\in \TTh, \nonumber \\                                                        
                                                     \eps\norm{\wgrad \vh^r}_{\TTh}^2            
                                                     &\lesssim 
                                                    \sum_{E\in\mathscr{E}_h}\int_E \eps h_E^{-1} \jump{\vhz}^2 \ ds +    \sum_{T\in\TTh} \eps h_T^{-1}\norm{\vhz-\vhb}^2_{\partial T}.\label{eq:bd0}
                                                           \end{align}
Using~\eqref{eq:jmp-bd} in~\eqref{eq:bd0} and~\eqref{eq:approx0} we obtain
                                                \begin{align}\label{eq:norm-bd0}
                                                   \eps\norm{\wgrad \vh^r}_{\TTh}^2  +\norm{\vhz^r}^2_{\TTh}          
                                                              \lesssim 
                                                               \sum_{T\in\TTh} 
                 (\eps h_T^{-1} + \eps (\kappa h_T)^{-1} + h_T)\norm{\vhz-\vhb}^2_{\partial T}.
                                                \end{align}
                                                
             Finally, for bounding $|\bb \vhz^r|_*$, we use~\eqref{eq:approx0}, Cauchy-Schwarz inequality and the definition of $|\bb\vhz^r|_*$
                                                \begin{align}\label{eq:norm-bd1}
                                               |\bb \vhz^r|_* &\lesssim  \eps^{-1/2}\norm{\bb}_{\infty}\norm{\vhz^r}_{\TTh}
                                               \nonumber
                                               \\
                                               &\lesssim  \norm{\bb}_{\infty}
                                                 \Big( \sum_{T\in\TTh}  \eps^{-1} h_T\norm{\vhz-\vhb}^2_{\partial T}\Big)^{1/2}.
                                                \end{align}
   \end{proof}    
   Next, we proceed to control the conforming term by first deriving the error equation. To this end, we introduce the following continuous subspace
                                                \[
                                                V_{h,1}^c:=\{ w \in H_0^1(\Omega):w|_K \in P_1(T), \ T\in\TTh \}
                                                \]
                                                 and observe that by setting $v_b$ as trace of $v$ on all the edges $E \in \pTh$, 
                                                 $V_{h,1}^c$ can be naturally embedded in $V_h$. This approach was used within the adaptive WG framework by Chen et al.~\cite{Chen2014} for obtaining partial orthogonality for second order elliptic problems.
                                                 
           We also introduce the following Cl\'ement interpolation operator specially designed for convection diffusion problems by Verf\"urth~\citep[Lemma 3.3]{verfurth2005robust} and references therein.
                                            We denote this operator by $\Pi_h: H_0^1\rightarrow V_{h,1}^c$ satisfying $\normmm(\Pi_h v)\lesssim \normmm(v)$ 
                                             and
                                             
                                                   \begin{align}                                       
                                              \Big( \sum_{T\in \TTh}   \alpha_T^{-2} \norm{v - \Pi_h v}^2_{T}\Big)^{1/2}
                                               &\lesssim  \normmm(v),\label{eq:inter_cell}\\
                                      \Big( \sum_{E\in \mathscr{E}_h}    \eps^{1/2}\alpha_E^{-1} \norm{v - \Pi_h v}_{E}^2\Big)^{1/2}&\lesssim \normmm(v), \quad v\in H_0^1(\Omega).\label{eq:inter_edge}
                                                   \end{align}                                                              
                              It is easy to see that for $u$ satisfying~\eqref{eq:cd} and $\uh \in V_h$ satisfying the  WG form~\eqref{eq:cd-wg}, the following holds true:
                                         \begin{align}\label{eq:partial-orth}
                                         \ah(u-\uh,\vh^c)=0 \quad  \ \forall v_h^c\in V_{h,1}^c.
                                         \end{align}            
                          As a consequence, we have the following error equation.                  
                             \begin{lemma}[Error Equation]
                             Let $u$ solve~\eqref{eq:cd1}--\eqref{eq:cd2}, $u_h \in V_{h}$ solve~\eqref{eq:cd-wg} decomposed as $\uh=u_h^c + \uh^r$ according to the decomposition~\eqref{eq:decomp}.
                              Then, for any $v \in H_0^1(\Omega)$, 
                                \begin{align}\label{eq:err-eqn}
                                     \ah(u-\uh,v)
                                     &=(osc(f,a),\hat{v})+(R_h, \hat{v})-\langle J_h, \hat{v}\rangle
                                       \end{align}
where $\hat{v}=v-\Pi^c_hv$ and $osc(f,a)$, $R_h$ and $J_h$ are defined in~\eqref{def:osc}~\eqref{def:estimator} respectively. 
\begin{proof} Since the exact solution $u$ solves~\eqref{eq:cd1}--\eqref{eq:cd2}, we obtain, by applying~\eqref{eq:partial-orth} for $\Pi^c_hv\in V_{h,1}^c$,
   \begin{align}\label{eq:conf-bound000}
       \ah(u-\uh,v)
       &=(f,v)-\ah(\uh,v)\nonumber\\
        &= (f,v-\Pi^c_hv)-\ah(\uh,v-\Pi^c_hv)  
      + \{ (f,\Pi^c_hv)-\ah(\uh,\Pi^c_hv) \}
         \nonumber\\
           &=  (f-f_h +a_h\uhz - a\uhz,\hat{v})+(R_h,\hat{v}) -(J_h, \hat{v}).
         \end{align}
          To obtain~\eqref{eq:conf-bound000}, we have added and subtracted the data $(f_h-a_h\uhz,\hat{v})$ and for the last two terms of~\eqref{eq:conf-bound000} we apply integration by parts to $\eps(\wgrad \uh,\wgrad\hat{v})$.
\end{proof} 
  \end{lemma}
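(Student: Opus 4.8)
The plan is to establish~\eqref{eq:err-eqn} as an exact algebraic identity, with the oscillation term and the residuals $R_h$, $J_h$ produced by a single elementwise integration by parts; no estimates enter. First I would record that $\ah(u,v) = (f,v)$ for every $v \in H_0^1(\Omega)$: by the Green's formula identifications noted just after~\eqref{def:norm_extension}, $\wgrad u$ reduces to the classical gradient $\nabla u$ and $\wdiv(\bb u)$ to $\div(\bb u)$, while the stabilization term $s(u,v)$ vanishes since $u_0 = u_b$ on each $\partial T$; integrating the diffusion term by parts and using~\eqref{eq:cd1} then gives the claim, so $\ah(u-\uh,v) = (f,v) - \ah(\uh,v)$. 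Then I would invoke the partial orthogonality~\eqref{eq:partial-orth}: since $\Pi^c_h v \in V_{h,1}^c$ we have $\ah(u-\uh,\Pi^c_h v) = 0$, so $v$ may be replaced by $\hat v = v - \Pi^c_h v$ at no cost, reducing the task to evaluating $(f,\hat v) - \ah(\uh,\hat v)$.

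Because $\hat v \in H_0^1(\Omega)$, again $s(\uh,\hat v) = 0$ (the conforming test function has matching interior and edge traces) and $\wgrad\hat v = \nabla\hat v$, while $\wdiv(\bb\uh)$ paired against $\hat v \in H^1$ equals its polynomial representative by~\eqref{def:wdiv-discrete}, so
\[
\ah(\uh,\hat v) = \eps(\wgrad\uh,\nabla\hat v)_{\TTh} + \big((a-a_h)\uhz,\hat v\big)_{\TTh} + \big(a_h\uhz + \wdiv(\bb\uh),\hat v\big)_{\TTh}.
\]
The one genuine computation is the elementwise integration by parts of the diffusion term: since $\wgrad\uh = \mwgrad u_h$ is a polynomial on each $T$ it lies in $H(\div;T)$, so $\eps(\wgrad\uh,\nabla\hat v)_T = -(\div(\eps\wgrad\uh),\hat v)_T + \langle \eps\bn\cdot\wgrad\uh,\hat v\rangle_{\partial T}$, and summing over $\TTh$ --- using that $\hat v$ is single-valued across interior edges and vanishes on $\Gamma$ --- collapses the boundary contributions to $\langle J_h,\hat v\rangle$ with $J_h = \jump{\eps\bn\cdot\wgrad u_h}$. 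Substituting back, adding and subtracting $f_h$ inside $(f,\hat v)$, and collecting terms produces the data-oscillation contribution $(f - f_h + a_h\uhz - a\uhz,\hat v)$, abbreviated $(osc(f,a),\hat v)$, together with the cell residual $R_h$ and the jump $J_h$ of~\eqref{def:estimator}, which is exactly~\eqref{eq:err-eqn}.

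I expect the main difficulty to be bookkeeping rather than anything analytic: one must keep track of the facts that $s(\cdot,\hat v) = 0$ on conforming test functions, that the weak operators $\wgrad$ and $\wdiv$ reduce to the classical gradient and divergence when paired against $H^1$ functions (via~\eqref{def:wgrad-discrete} and~\eqref{def:wdiv-discrete}), and that the jump orientation in $J_h$ is consistent with the elementwise outward normals. Since the lemma asserts a pure identity, once these identifications are in place the result follows directly.
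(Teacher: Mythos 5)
Your proposal is correct and follows essentially the same route as the paper: consistency of $\ah(\cdot,\cdot)$ with the exact solution, the partial orthogonality~\eqref{eq:partial-orth} to pass from $v$ to $\hat v = v - \Pi^c_h v$, adding and subtracting the discrete data $f_h$ and $a_h\uhz$, and an elementwise integration by parts of $\eps(\wgrad\uh,\nabla\hat v)$ to produce $R_h$ and the jump term $J_h$. You simply spell out more of the bookkeeping (vanishing of the stabilization on conforming functions, reduction of the weak operators to classical ones via~\eqref{def:wgrad-discrete} and~\eqref{def:wdiv-discrete}) that the paper leaves implicit.
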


       \begin{lemma}[Conforming term bound]\label{lemma-conf} Let $u$ solve~\eqref{eq:cd1}--\eqref{eq:cd2} and $\uh \in V_h$ solve~\eqref{eq:cd-wg} and express $\uh=\uh^c+ \uh^r$ according to~\eqref{eq:decomp}.  
       For any $v\ne 0\in H_0^1(\Omega)$,      
       \begin{align}\label{eq-conf}
        \normmm(u-\uh^c) + |\bb(u-\uh^c)|_* \lesssim 
        \frac{\ah(\uh^r,v)}{\normmm(v)}
          + \eta_h 
          +
          osc(f, a).
       \end{align}
     
       \end{lemma}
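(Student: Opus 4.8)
The plan is to invoke the inf-sup condition (the displayed theorem on $\ah(\cdot,\cdot)$ over $H_0^1(\Omega)$) applied to the conforming error $u - \uh^c \in H_0^1(\Omega)$, which gives a test function $v \in H_0^1(\Omega)$ with
\[
C\big(\normmm(u-\uh^c) + |\bb(u-\uh^c)|_*\big) \le \frac{\ah(u-\uh^c,v)}{\normmm(v)}.
\]
So everything reduces to bounding $\ah(u-\uh^c,v)$ from above by the right-hand side of \eqref{eq-conf} times $\normmm(v)$. First I would write $u - \uh^c = (u - \uh) + \uh^r$, using the decomposition $\uh = \uh^c + \uh^r$ from \eqref{eq:decomp}, so that $\ah(u-\uh^c,v) = \ah(u-\uh,v) + \ah(\uh^r,v)$. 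The term $\ah(\uh^r,v)$ is exactly the first term on the right of \eqref{eq-conf}, divided by $\normmm(v)$ after we divide through; no further work needed there beyond carrying it along.

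Second, I would handle $\ah(u-\uh,v)$ by invoking the Error Equation \eqref{eq:err-eqn}, which says $\ah(u-\uh,v) = (osc(f,a),\hat v) + (\Rh,\hat v) - \langle \Jh,\hat v\rangle$ with $\hat v = v - \Pi_h^c v$. Then each of these three terms is estimated by Cauchy–Schwarz on each cell (resp.\ edge), inserting the weights $\alpha_T$ (resp.\ $\alpha_E$): for the residual term, $(\Rh,\hat v) = \sum_T (\Rh,\hat v)_T \le \sum_T \alpha_T\norm{\Rh}_T \cdot \alpha_T^{-1}\norm{\hat v}_T$, and then apply $\big(\sum_T \alpha_T^{-2}\norm{\hat v}_T^2\big)^{1/2} \lesssim \normmm(v)$ from \eqref{eq:inter_cell}; this produces $\eta_{T,1}$ summed, i.e.\ a piece of $\eta_h$, times $\normmm(v)$. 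Likewise the jump term $\langle \Jh,\hat v\rangle = \sum_E \langle \Jh,\hat v\rangle_E \le \sum_E \eps^{-1/4}\alpha_E^{1/2}\norm{\Jh}_E \cdot \eps^{1/4}\alpha_E^{-1/2}\norm{\hat v}_E$ gives $\big(\sum_E \alpha_E\eps^{-1/2}\norm{\Jh}_E^2\big)^{1/2}$ times $\big(\sum_E \eps^{1/2}\alpha_E^{-1}\norm{\hat v}_E^2\big)^{1/2}$, the latter $\lesssim \normmm(v)$ by \eqref{eq:inter_edge}, yielding the edge contribution $\eta_E$ to $\eta_h$. The oscillation term is bounded the same way as the residual term, using \eqref{eq:inter_cell}, and produces $osc(f,a)\,\normmm(v)$. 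Note that $\eta_{T,2}$ (the stabilization indicator) does not appear on the right of \eqref{eq-conf} explicitly but is dominated by $\eta_h$, so writing $\eta_h$ on the right is consistent.

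Collecting, $\ah(u-\uh^c,v) \lesssim \big(\ah(\uh^r,v)/\normmm(v) + \eta_h + osc(f,a)\big)\normmm(v)$ — here I should be a little careful that $\ah(\uh^r,v)$ appears linearly, not under a square root, so I keep it as a separate additive term rather than folding it into a sum of squares. Dividing by $\normmm(v)$ and using the inf-sup lower bound for $u-\uh^c$ gives \eqref{eq-conf}. The main obstacle, such as it is, is bookkeeping the $\eps$-weights so that the computable estimator $\eta_h$ (with its $\alpha_T,\alpha_E$ and the $\eps^{-1/2}$ in $\eta_E$) comes out exactly, rather than with a stray power of $\eps$; this is precisely what the Verfürth interpolation bounds \eqref{eq:inter_cell}–\eqref{eq:inter_edge} are tailored to absorb, so the matching is forced once one pairs the weights correctly. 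A minor subtlety: the inf-sup theorem is stated with $\theta \in (0,1)$ and constant $C$ independent of $\theta$, so one takes $\theta$ fixed (say $\theta = 1/2$) and absorbs it into the generic constant.
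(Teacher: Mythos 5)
Your proposal is correct and follows essentially the same route as the paper: apply the inf-sup condition to $u-\uh^c$, split $\ah(u-\uh^c,v)=\ah(\uh^r,v)+\ah(u-\uh,v)$ via the decomposition $\uh^c=\uh-\uh^r$, invoke the error equation~\eqref{eq:err-eqn}, and close with weighted Cauchy--Schwarz and the interpolation estimates~\eqref{eq:inter_cell}--\eqref{eq:inter_edge}. Your explicit bookkeeping of the $\alpha_T$, $\alpha_E$, and $\eps$ weights simply fills in details the paper leaves implicit, and your reading of the inf-sup step (choosing a near-maximizing $v$ rather than an arbitrary one) is the correct interpretation of how the lemma is actually used.
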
  
       \begin{proof} Thanks to the inf-sup condition~\eqref{eq:inf-sup}, for any  $v\in H_0^1(\Omega)\setminus \{0\}$,
       \begin{align}\label{eq:conf-bound0}
      \normmm(u-\uh^c) + |\bb(u-\uh^c)|_* \lesssim \frac{\ah(u-\uh^c,v) }{\normmm(v)}.
       \end{align}
  Now, for the rhs of~\eqref{eq:conf-bound0}, using $\uh^c=\uh-\uh^r$ and~\eqref{eq:err-eqn} we have
        \begin{align}\label{eq:conf-bound1}
       \ah(u-\uh^c,v)
       &=(f,v)-\ah(\uh-\uh^r,v)\nonumber\\
        &=
           \ah(\uh^r,v)+
           (f,v)-\ah(\uh,v)   \nonumber\\
        &= \ah(\uh^r,v)+(osc(f,a),\hat{v})+(R_h, \hat{v})-\langle J_h, \hat{v}\rangle.
         \end{align}
     In view of~\eqref{eq:conf-bound1} and~\eqref{eq:conf-bound0} and using the approximation properties~\eqref{eq:inter_cell} and~\eqref{eq:inter_cell} we obtain the desired upper bound.
         \end{proof}
  We are now in the position to prove the reliability of the estimator.                                         
          \begin{theorem}[Reliability] Let $u$ solve~\eqref{eq:cd} and $\uh \in V_{h}$ be the WG approximation to~\eqref{eq:cd-wg}. Then, 
              \begin{align}
              \normmm(u-\uh)
              +
              |\bb(u-\uhz)|_*
               \lesssim \eta_h + osc(f, a).
              \end{align}
              \end{theorem}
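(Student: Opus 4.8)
The plan is to exploit the conforming/non-conforming splitting $\uh = \uh^c + \uh^r$ from \eqref{eq:decomp} (with $\uh^c \in \Vhc = \Vh \cap H_0^1(\Omega)$ and $\uh^r = \{\uhz-\uh^c,\uhb-\uh^c\}$) together with the two preceding lemmas. Writing $u-\uh = (u-\uh^c) - \uh^r$ as weak functions and $\uhz = \uh^c + \uhz^r$, and using that both $\normmm(\cdot)$ — viewed as a seminorm on weak functions, whose stabilization part vanishes on the $H_0^1$-functions $u$ and $\uh^c$, so that it reduces there to the norm used in \eqref{eq:inf-sup} — and the seminorm $|\bb\,\cdot\,|_*$ obey the triangle inequality, one reduces the claim to
\[
\normmm(u-\uh) + |\bb(u-\uhz)|_* \le \Big(\normmm(u-\uh^c) + |\bb(u-\uh^c)|_*\Big) + \Big(\normmm(\uh^r) + |\bb\,\uhz^r|_*\Big),
\]
and then bounds each bracket by $\eta_h + osc(f,a)$.

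For the non-conforming bracket, Lemma~\ref{lemma-nconf} (estimate \eqref{eq:bound-non-conform1}) bounds $\normmm(\uh^r) + |\bb\,\uhz^r|_*$ by a weighted sum of the jump terms $\norm{\uhz-\uhb}_{\partial T}$ with weights $\eps h_T^{-1} + \eps(\kappa h_T)^{-1} + h_T$. The key observation is that this weight is pointwise dominated by the stabilization weight $\tau$ of \eqref{eq:energy-norm}, since $\tau|_{\partial T} = |\bb\cdot\bn| + \eps h_T^{-1} + \eps(\kappa h_T)^{-1} + (\norm{\bb}_\infty\eps^{-1}+1)h_T \ge \eps h_T^{-1} + \eps(\kappa h_T)^{-1} + h_T$. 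Hence the weighted sum is controlled by $\sum_{T\in\TTh}\langle\tau(\uhz-\uhb),\uhz-\uhb\rangle_{\partial T} = \sum_{T\in\TTh}\eta_{T,2}^2 \le \eta_h^2$ by \eqref{def:estimator}--\eqref{eq:estimator}, so that $\normmm(\uh^r) + |\bb\,\uhz^r|_* \lesssim \eta_h$. This is exactly where the extra diffusion-dependent stabilization term flagged after \eqref{def:norm_extension} earns its keep.

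For the conforming bracket, Lemma~\ref{lemma-conf} (inequality \eqref{eq-conf}) already gives $\normmm(u-\uh^c) + |\bb(u-\uh^c)|_* \lesssim \ah(\uh^r,v)/\normmm(v) + \eta_h + osc(f,a)$, where $v$ is the test function produced by the inf-sup argument. The residual quotient is then removed by the continuity estimate \eqref{eq:bound-non-conform} of Lemma~\ref{lemma-nconf}, $\ah(\uh^r,v) \lesssim \max\{1,\norm{a}_\infty,\norm{\bb}_\infty\}\,\normmm(\uh^r)\,\normmm(v)$, which gives $\ah(\uh^r,v)/\normmm(v) \lesssim \normmm(\uh^r) \lesssim \eta_h$ by the previous step. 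Thus $\normmm(u-\uh^c) + |\bb(u-\uh^c)|_* \lesssim \eta_h + osc(f,a)$; adding the two brackets yields the theorem.

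The substantive analytic work has already been done in the earlier results — the inf-sup bound \eqref{eq:inf-sup} for the augmented quantity $\normmm(\cdot)+|\bb\,\cdot\,|_*$, the $\eps$-robust Cl\'ement interpolation behind \eqref{eq:inter_cell}--\eqref{eq:inter_edge}, and the two estimates of Lemma~\ref{lemma-nconf}. At the level of this theorem the only point requiring care is the bookkeeping: checking that the mixed quantity $\normmm(\cdot) + |\bb\,\cdot\,|_*$ genuinely splits along the decomposition \eqref{eq:decomp}, and that $\tau$ really dominates the weight appearing in \eqref{eq:bound-non-conform1} so that the non-conforming remainder is absorbed by the edge indicator $\eta_{T,2}$. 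No smallness condition on the mesh size is needed.
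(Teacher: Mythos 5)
Your proposal is correct and follows essentially the same route as the paper: decompose $\uh=\uh^c+\uh^r$, apply the triangle inequality to $\normmm(\cdot)+|\bb\,\cdot\,|_*$, bound the conforming part via Lemma~\ref{lemma-conf} and the inf-sup argument, and absorb both the non-conforming remainder and the residual quotient $\ah(\uh^r,v)/\normmm(v)$ using the two estimates of Lemma~\ref{lemma-nconf}. The only difference is that you make explicit the bookkeeping the paper leaves implicit in ``the result now follows thanks to Lemma~\ref{lemma-nconf}'', namely that the weight $\eps h_T^{-1}+\eps(\kappa h_T)^{-1}+h_T$ is dominated by $\tau$ so the jump terms are controlled by $\eta_{T,2}$.
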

\begin{proof}
Decompose $\uh=\uh^c+\uh^r$ where $\uh^c\in V_h^c$. By the triangle inequality and Lemma~\ref{lemma-conf}, for any nonzero $v\in H_0^1(\Omega)$,
\begin{align*}
\normmm(u-\uh) +  |\bb(u-\uhz)|_*
&\le \normmm(u-\uh^c) +  |\bb(u-\uh^c)|_*
+
\normmm(\uh^r) +  |\bb\uhz^r|_*\nonumber\\
 &\lesssim \frac{\ah(\uh^r,v)}{\normmm(v)}
                + \eta_h 
                +
                osc(f, a)
                +
                \normmm(\uh^r) +  |\bb\uhz^r|_*
\end{align*}
The result now follows thanks to Lemma~\ref{lemma-nconf}.
\end{proof}

      \begin{theorem}[Efficiency]
          \begin{align}
         \eta_h
           \lesssim   \normmm(u-\uh) + |\bb(u-\uhz)|_* + osc(f,a).
          \end{align}
          \end{theorem}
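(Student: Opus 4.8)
The estimator splits as $\eta_h^2 = \sum_{T\in\TTh}(\eta_{T,1}^2+\eta_{T,2}^2)+\sum_{E\in\mathscr{E}_h}\eta_E^2$, and I would bound the three pieces separately, assembling at the end with the finite-overlap property of the element/edge patches $\omega_T,\omega_E$ and the sub-additivity $\sum_{T}|\bb(u-\uhz)|_{*,T}^2\le|\bb(u-\uhz)|_*^2$ of the \emph{localized} dual seminorm $|\bb q|_{*,T}:=\sup_{0\ne w\in H_0^1(T)}\int_T\bb q\cdot\nabla w/\normmm(w)$ (immediate from the disjoint supports of the local test functions). The stabilization indicator needs no work: by \eqref{def:norm_extension}, $\sum_{T\in\TTh}\eta_{T,2}^2=\norm{\tau^{1/2}(\uhz-\uhb)}_{\partial\TTh}^2\le\normmm(u-\uh)^2$, since this is exactly one of the three terms of $\normmm(u-\uh)^2$.

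\emph{Cell residual $\eta_{T,1}=\alpha_T\norm{R_h}_T$.} As $R_h|_T$ is a polynomial, I would use the interior bubble $b_T$ and the norm equivalence $\norm{R_h}_T^2\lesssim\int_T b_T R_h^2=\int_T R_h(b_TR_h)$. Writing $w_T:=b_TR_h\in H_0^1(T)\subset H_0^1(\Omega)$ and inserting the identity on $T$, valid because $u$ solves \eqref{eq:cd1},
\[
R_h=(f_h-f)+\div\!\big(\eps(\wgrad\uh-\nabla u)\big)+\big(\div(\bb u)-\wdiv(\bb\uhz)\big)+(au-a_h\uhz),
\]
I would integrate by parts in the diffusion term ($w_T$ vanishes on $\partial T$) and use \eqref{def:wdiv}--\eqref{def:wdiv-discrete} to rewrite the convective term as $-\int_T\bb(u-\uhz)\cdot\nabla w_T$. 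Then the source/reaction data terms, once weighted by $\alpha_T$, enter $osc(f,a)$; the diffusion term is handled by the inverse estimate $\norm{\nabla w_T}_T\lesssim h_T^{-1}\norm{R_h}_T$ together with $\alpha_T\le h_T\eps^{-1/2}$, turning the prefactor $\alpha_T\eps h_T^{-1}$ into $\eps^{1/2}$ and hence into $\normmm(u-\uh)$; the reaction-error term $\int_T a(u-\uhz)w_T$ uses $\alpha_T\le c_0^{-1/2}$ to give $\lesssim\norm{u-\uhz}_T\le\normmm(u-\uh)$; and the convective term is bounded by $|\bb(u-\uhz)|_{*,T}\normmm(w_T)$ via \eqref{def:operator-norm}, with $\normmm(w_T)^2=\eps\norm{\nabla w_T}_T^2+\norm{w_T}_T^2\lesssim(\eps h_T^{-2}+1)\norm{R_h}_T^2$, so that $\alpha_T(\eps h_T^{-2}+1)^{1/2}\lesssim1$. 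Summing over $T$ yields $\sum_T\eta_{T,1}^2\lesssim\normmm(u-\uh)^2+|\bb(u-\uhz)|_*^2+osc(f,a)^2$.

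\emph{Edge jump $\eta_E=\alpha_E^{1/2}\eps^{-1/4}\norm{J_h}_E$.} Since the weak-gradient jump is not a jump of an elementwise classical derivative, I would first use the WG estimate $\norm{\wgrad\vh-\nabla\vhz}_T\lesssim h_T^{-1/2}\norm{\vhz-\vhb}_{\partial T}$ (obtained by testing \eqref{def:wgrad} with $\bpsi=\wgrad\vh-\nabla\vhz$ and a trace--inverse inequality) to split $J_h=\jump{\eps\bn\cdot\nabla\uhz}+\jump{\eps\bn\cdot(\wgrad\uh-\nabla\uhz)}$; a trace inequality on $E$ shows the second piece contributes to $\eta_E^2$ a multiple of $\eps h_T^{-1}\norm{\uhz-\uhb}_{\partial T}^2$, which is absorbed by $\eta_{T,2}^2$ precisely because the stabilization term we added forces $\tau\gtrsim\eps h_T^{-1}$. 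For the remaining conforming-gradient jump I would run the standard edge-bubble argument on $\omega_E$: with $w_E=b_E\widetilde{J_h}$ and $\norm{J_h}_E^2\lesssim\langle J_h,w_E\rangle_E$, integration by parts over the two elements of $\omega_E$ together with $\langle\jump{\eps\bn\cdot\nabla u},w_E\rangle_E=0$ for the exact flux (legitimate since $\div(\eps\nabla u)=\div(\bb u)+au-f\in L^2$) gives, up to data oscillation, $\langle J_h,w_E\rangle_E=(R_h,w_E)_{\omega_E}-\ah(u-\uh,w_E)$, where $\ah(u-\uh,w_E)$ is estimated by $\eps^{1/2}\norm{\wgrad\uh-\nabla u}_{\omega_E}\,\eps^{1/2}\norm{\nabla w_E}_{\omega_E}+|\bb(u-\uhz)|_{*,\omega_E}\normmm(w_E)+\norm{a}_\infty\norm{u-\uhz}_{\omega_E}\norm{w_E}_{\omega_E}$ and the $(R_h,w_E)_{\omega_E}$-term is reabsorbed through the cell bound just proven. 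The bubble scalings $\norm{w_E}_{\omega_E}\lesssim h_E^{1/2}\norm{J_h}_E$, $\norm{\nabla w_E}_{\omega_E}\lesssim h_E^{-1/2}\norm{J_h}_E$, the shape-regularity $h_E\simeq h_T$, the weight $\alpha_E\le h_E\eps^{-1/2}$ and the $\eps^{-1/2}$ factor in $\eta_E^2$ are what keep the constants $\eps$-independent; summing over $E$ with finite overlap finishes.

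\emph{Main obstacle.} The crux throughout is the $\eps$-uniformity of every constant, i.e.\ making the Verf\"urth weights $\alpha_T,\alpha_E$ balance exactly the bubble-function and inverse-inequality factors. I expect the two genuinely delicate points to be: (i) the convective contributions to both the cell and the edge residual, where the naive bound $\eps^{-1/2}\norm{\bb}_\infty\norm{u-\uhz}$ is not robust and must be replaced everywhere by the (localized) seminorm $|\bb(u-\uhz)|_*$ --- this is exactly why that term sits on the right-hand side --- and (ii) the treatment of $J_h=\jump{\eps\bn\cdot\wgrad\uh}$, which, being a jump of a weak rather than classical gradient, cannot be fed directly into the edge-bubble machinery and must first be reduced to a classical-gradient jump plus a remainder controlled by the newly introduced $\eps$-dependent stabilization via $\eta_{T,2}$.
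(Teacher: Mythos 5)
Your proposal is correct and follows the same overall strategy as the paper: Verf\"urth-type element and edge bubble functions, the dual seminorm $|\bb(u-\uhz)|_*$ to absorb the convective residual robustly, and the weights $\alpha_T,\alpha_E$ balancing the inverse-inequality factors; your treatment of $\eta_{T,2}$ and of the cell residual $\eta_{T,1}$ matches the paper's Cell efficiency lemma essentially line for line (up to the harmless normalization $v_T=\alpha_T^2R_hb_T$ versus your $w_T=b_TR_h$). The one place you genuinely diverge is the edge term: you first split $J_h=\jump{\eps\bn\cdot\nabla\uhz}+\jump{\eps\bn\cdot(\wgrad\uh-\nabla\uhz)}$, control the second piece by $\eps h_T^{-1}\norm{\uhz-\uhb}_{\partial T}^2\lesssim\eta_{T,2}^2$ via the WG estimate $\norm{\wgrad\uh-\nabla\uhz}_T\lesssim h_T^{-1/2}\norm{\uhz-\uhb}_{\partial T}$, and only then run the edge-bubble argument on the classical-gradient jump. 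The paper instead feeds $\jump{\eps\bn\cdot\wgrad\uh}$ directly into the edge-bubble machinery, integrating by parts elementwise over $\omega_E$ --- which is legitimate because $\wgrad\uh$ is an elementwise polynomial and hence piecewise in $H(\operatorname{div})$, so your worry that this ``cannot be fed directly'' is slightly overstated. That said, your detour is valid and has the virtue of making explicit exactly where the added $\eps h_T^{-1}$ stabilization term is consumed, and your explicit localization $\sum_T|\bb q|_{*,T}^2\le|\bb q|_*^2$ via disjointly supported test functions is a step the paper leaves implicit when it passes from the elementwise bounds to the global seminorm.
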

          \begin{proof}
    Efficiency will be shown through the following lemmas which employ the bubble function techniques introduced by Verf\"{u}rth\cite{verfurth2005robust}.
    We remark that the arguments presented here are simpler than its DG counterpart presented in~\cite{schotzau2009robust} thanks to the simpler weak form.
          \end{proof}
          
          \begin{lemma}[Cell efficiency]
              \begin{align}\label{eq:Cell-eff}
                   \Big(\sum_{T\in \TTh}\big(\eta_{T,1}^2 + 
                                           \eta_{T,2}^2\big)\Big)^{1/2}
                     \lesssim   \normmm(u-\uh) + |\bb(u-\uhz)|_* + osc(f,\bb,a).
                    \end{align}
          \end{lemma}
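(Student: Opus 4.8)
The plan is to handle the two cell contributions separately. The term $\eta_{T,2}$ costs nothing: directly from the definition of the energy norm in~\eqref{def:norm_extension},
\[
\sum_{T\in\TTh}\eta_{T,2}^2=\sum_{T\in\TTh}\langle\tau(\uhz-\uhb),\uhz-\uhb\rangle_{\partial T}=\norm{\tau^{1/2}(\uhz-\uhb)}_{\partial\TTh}^2\le\normmm(u-\uh)^2 .
\]
All the work is in the cell residual $\eta_{T,1}=\alpha_T\norm{\Rh}_T$, which I would estimate with the interior-bubble technique of Verf\"urth. For each $T$ let $b_T$ be the standard interior bubble on $T$ and set $w_T:=b_T\Rh\in H_0^1(T)\subset H_0^1(\Omega)$ (extended by zero). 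Since $\Rh|_T$ is a polynomial of bounded degree, the standard bubble estimates give $\norm{\Rh}_T^2\lesssim(\Rh,w_T)_T$, $\norm{w_T}_T\lesssim\norm{\Rh}_T$ and $\norm{\nabla w_T}_T\lesssim h_T^{-1}\norm{\Rh}_T$.

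Next I would derive the residual identity: integrating by parts in $\div(\eps\wgrad\uh)$, using the discrete weak divergence relation~\eqref{def:wdiv-discrete} (which applies since $w_T$ vanishes on $\partial T$, so the boundary term drops), and then testing~\eqref{eq:cd1} against $w_T\in H_0^1(\Omega)$, one obtains after routine algebra
\begin{align*}
(\Rh,w_T)_T&=(f_h-f,w_T)_T+\eps(\nabla u-\wgrad\uh,\nabla w_T)_T-(\bb(u-\uhz),\nabla w_T)_T\\
&\qquad+(a(u-\uhz),w_T)_T+((a-a_h)\uhz,w_T)_T .
\end{align*}

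I would multiply this identity by $\alpha_T^2$, sum over $T$, and bound the right-hand side by $\bigl(\sum_T\alpha_T^2\norm{\Rh}_T^2\bigr)^{1/2}$ times an admissible quantity via Cauchy--Schwarz, using the two elementary weight inequalities $\alpha_T\eps^{1/2}h_T^{-1}\le1$ and $\alpha_T\le c_0^{-1/2}$ (or $\alpha_T\le1$ when $c_0=0$) coming from~\eqref{est:weights}. The $f$- and $a$-oscillation terms then produce $osc(f,a)$; the diffusion term gives $\eps^{1/2}\norm{\nabla u-\wgrad\uh}_{\TTh}\le\normmm(u-\uh)$ once $\alpha_T^2\eps^{1/2}h_T^{-1}$ is replaced by $\alpha_T$; the reaction term gives $\norm{a}_\infty c_0^{-1/2}\norm{u-\uhz}_{\TTh}\lesssim\normmm(u-\uh)$. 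For the convection term I would deliberately avoid an elementwise bound (estimating $(\bb(u-\uhz),\nabla w_T)_T$ by $\norm{\bb}_\infty\norm{u-\uhz}_T h_T^{-1}\norm{\Rh}_T$ would inject a non-robust factor $\eps^{-1/2}$). Instead, set $W:=\sum_T\alpha_T^2 w_T\in H_0^1(\Omega)$; the convection contributions aggregate into $-\int_\Omega\bb(u-\uhz)\cdot\nabla W\,dx$, which in absolute value is at most $|\bb(u-\uhz)|_*\,\normmm(W)$ with $\normmm(W)^2=\eps\norm{\nabla W}_{\TTh}^2+\norm{W}_{\TTh}^2$. Since the $w_T$ have pairwise disjoint supports, $\normmm(W)^2=\sum_T\alpha_T^4(\eps\norm{\nabla w_T}_T^2+\norm{w_T}_T^2)\lesssim\sum_T\alpha_T^2(\alpha_T^2\eps h_T^{-2}+\alpha_T^2)\norm{\Rh}_T^2\lesssim\sum_T\alpha_T^2\norm{\Rh}_T^2$, again by the weight bounds. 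Assembling the pieces gives $\sum_T\alpha_T^2\norm{\Rh}_T^2\lesssim\bigl(osc(f,a)+\normmm(u-\uh)+|\bb(u-\uhz)|_*\bigr)\bigl(\sum_T\alpha_T^2\norm{\Rh}_T^2\bigr)^{1/2}$; dividing through, adding the trivial bound for $\eta_{T,2}$, and using $osc(f,a)\le osc(f,\bb,a)$ yields~\eqref{eq:Cell-eff}.

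The step I expect to be the main obstacle is the convection term: this is exactly where $\eps$-robustness is at stake, and it is the reason the dual seminorm $|\cdot|_*$ was built into the error measure in Section~\ref{sec:wg}. The two delicate points are (i) that assembling the local bubbles into the single test function $W$ keeps $\normmm(W)$ controlled by $\bigl(\sum_T\alpha_T^2\norm{\Rh}_T^2\bigr)^{1/2}$ uniformly in $\eps$, which hinges essentially on $\alpha_T\eps^{1/2}h_T^{-1}\le1$, and (ii) checking that the weights $\alpha_T$ absorb every mesh- and $\eps$-dependent constant produced by the inverse inequalities, in both the diffusion-dominated regime $h_T\lesssim\sqrt{\eps}$ and the convection-dominated regime $h_T\gtrsim\sqrt{\eps}$. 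Finally, if in an implementation $\bb$ is replaced by a polynomial surrogate when evaluating $\wdiv(\bb\uhz)$, the induced perturbation is of oscillation type and is subsumed in the enlarged oscillation $osc(f,\bb,a)$ appearing in the statement.
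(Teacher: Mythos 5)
Your proposal is correct and takes essentially the same route as the paper: the trivial bound for $\eta_{T,2}$ via the energy norm, the interior-bubble test function $\alpha_T^2 R_h b_T$ with local norm equivalence, the residual identity obtained from the strong form and elementwise integration by parts, and control of the convection contribution through the dual seminorm $|\bb(u-\uhz)|_*$ applied to the aggregated test function. Your treatment is in fact somewhat more explicit than the paper's (notably in assembling the local bubbles into a single $W\in H_0^1(\Omega)$ and verifying $\normmm(W)\lesssim\bigl(\sum_T\alpha_T^2\norm{\Rh}_T^2\bigr)^{1/2}$ via the weight inequalities), but the underlying argument is the same.
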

          \begin{proof}
It is easy to see that $\sum_{T\in \TTh}\eta_{T,2}^2 \le \normmm(u-\uh)^2$.
          Now for bounding $\sum_{T\in \TTh}\eta_{T,1}^2$, we let $b_T$ denote the element-bubble function described in~\citep[p. 1771]{verfurth2005robust} for any cell $T\in \TTh$.
          We first take 
          \[
          v_T=\alpha_T^2 R_h b_T.
          \]
   Since $R_h$ belongs to a finite dimensional space on $T$, a local equivalence of norms yields
   \begin{align}
   &\int_T \alpha_T^2 |R_h|^2 \ dx
   \lesssim   
   \int_T  \alpha_T^2  R_h^2 \ b_T \ dx =   \int_T  R_h \ v_T \ dx.\label{eq:eff0}
   \end{align}
   Since the exact solution $u$ satisfies the strong form $-\eps \div \nabla u + \div(\bb u) +a u =f $ on each $T \in \TTh$, by adding and subtracting the exact data and applying integration by parts to $ \int_T \div (\eps\wgrad u - \eps\wgrad \uh)  v_T \ dx $ in conjunction with $v_T|_{\partial T}=0 $, we have
    \begin{align}
    &\int_T  R_h \ v_T  \ dx
       =   \int_T \eps(\wgrad u - \wgrad \uh) \cdot \wgrad v_T \ dx \  + \nonumber\\
     &   \int_T \big(\wdiv(\bb u) -\wdiv(\bb \uh) + a(u-\uhz) \big)\nabla v_T \ dx \  + \nonumber\\
   &   \int_T (f_h-f + (a_h-a)\uhz)v_T \ dx.  \label{eq:eff1}
    \end{align}   
    Now combining~\eqref{eq:eff0} and~\eqref{eq:eff1}, summing over all $T \in \TTh$ and applying Cauchy Schwarz inequality we obtain
    \begin{align}
   \sum_{T\in \TTh}\eta_{T,1}^2
   & 
   \lesssim \big(\normmm(\eh)+ |\bb \eh|_*+ osc(f,a)\big) 
 \Big(\sum_{T\in\TTh}\normmm(v_T)^2 \  + \alpha_T^{-2}\norm{v_T}_T^2\Big)^{1/2}.
      \end{align}  
Thus, 
  \begin{align}
        & \sum_{T\in \TTh}\eta_{T,1}^2
         \lesssim
         \Big(\normmm(\eh)+ |\bb \eh|_*+  osc(f,a)\Big) \nonumber \\
            & \quad \ \Big( \normmm(v_T)^2 \  +  \sum_{T\in\TTh}\alpha_T^{-2}\norm{v_T}_T^2\Big)^{1/2}.
          \label{eq:eff2}
      \end{align} 
      Using the following property of the element bubble function
        \[
        \normmm(\sigma b_T) \lesssim
             \alpha_T^{-1}  \norm{\sigma }_{L^2(T)} , \ \norm{b_T}_{\infty}=1
        \]   
        $\normmm(v_T)^2 \lesssim \alpha_T^{2}  \norm{R_h }_{L^2(T)} $ and $\alpha_T^{-2}\norm{v_T}_T^2 \lesssim \alpha_T^{2}  \norm{R_h }_{L^2(T)}$
         the assertion follows.   
          \end{proof}
             \begin{lemma}[edge efficiency]
                        \begin{align}\label{eq:edge-eff}
                             \Big(\sum_{E\in \mathscr{E}_h}\eta_{E}^2\Big)^{1/2}
                               \lesssim   \normmm(u-\uh) + |\bb(u-\uhz)|_* + osc(f).
                              \end{align}
                    \end{lemma}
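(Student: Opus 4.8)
The plan is to bound the edge residual $\eta_E^2 = \alpha_E \eps^{-1/2}\norm{J_h}_E^2$ with $J_h = \jump{\eps\bn\cdot\wgrad u_h}$ by testing against a suitably scaled edge-bubble function, following the standard Verf\"urth technique adapted to the convection-diffusion weights. First I would, for each interior edge $E\in\mathscr{E}_h$ shared by cells $T_1,T_2$, let $b_E$ denote the edge-bubble function supported on the patch $\omega_E = T_1\cup T_2$ as described in~\citep[p. 1771]{verfurth2005robust}, extend $J_h$ polynomially into $\omega_E$, and set $v_E = \alpha_E\eps^{-1/2} J_h b_E$. By the local norm equivalence on the finite-dimensional polynomial space on $E$, one gets $\alpha_E\eps^{-1/2}\norm{J_h}_E^2 \lesssim \alpha_E\eps^{-1/2}\int_E J_h^2 b_E\,ds = \int_E J_h\, v_E\,ds$, reducing matters to estimating $\sum_E \int_E J_h v_E\,ds$.

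Next, on each patch I would use that $u$ solves the strong equation $-\eps\div\nabla u + \div(\bb u) + au = f$ cellwise, integrate by parts the term $\int_{\omega_E}\div(\eps\wgrad u - \eps\wgrad u_h)\,v_E\,dx$ over $T_1$ and $T_2$ separately, and collect the boundary contributions along $E$: since $v_E$ vanishes on $\partial\omega_E$, the only surviving edge term is $-\langle\jump{\eps\bn\cdot\wgrad u_h},v_E\rangle_E = -\int_E J_h v_E\,ds$ (using that $\jump{\eps\bn\cdot\nabla u}=0$ for the exact solution). Rearranging yields
\[
\int_E J_h v_E\,ds = -\int_{\omega_E}\eps(\wgrad u - \wgrad u_h)\cdot\nabla v_E\,dx - \int_{\omega_E}\big(\wdiv(\bb u)-\wdiv(\bb u_h)+a(u-u_{h0})\big)\nabla v_E\,dx + \int_{\omega_E}(R_h + f - f_h + (a_h-a)u_{h0})\,v_E\,dx,
\]
where the last volume term also absorbs the cell residual $R_h$ on $\omega_E$. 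Applying Cauchy--Schwarz, using the bubble-function scaling $\normmm(v_E)\lesssim \alpha_E^{-1}\eps^{1/2}\cdot\alpha_E\eps^{-1/2}\norm{J_h}_E = \norm{J_h}_E$ together with $\alpha_E^{-1}\norm{v_E}_{\omega_E}\lesssim \eps^{-1/2}\norm{J_h}_E$, and the operator-norm bound~\eqref{eq:div-bd} to handle the $\wdiv(\bb\cdot)$ term, summing over all edges (with bounded overlap of the patches $\omega_E$ by shape-regularity) and invoking the cell efficiency bound~\eqref{eq:Cell-eff} for the $R_h$ contribution, gives $\sum_E\eta_E^2 \lesssim \normmm(u-u_h)^2 + |\bb(u-u_{h0})|_*^2 + osc(f,a)^2$, which is the claim.

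The main obstacle I expect is the treatment of the convection term $\wdiv(\bb u)-\wdiv(\bb u_h)$ on the patch: unlike the pure-diffusion case, one cannot simply bound it in the energy norm without picking up an inverse power of $\eps$, so one must route it through the $|\cdot|_*$ semi-norm via~\eqref{eq:div-bd} and carefully verify that the local test function $v_E$ (after the appropriate rescaling) contributes the correct $\eps$-robust weight $\alpha_E\eps^{-1/2}$ rather than an $\eps^{-1}$ blow-up. A secondary technical point is making the "add and subtract the data" step precise so that the residual $R_h$ (which already contains $\wdiv(\bb u_{h0})$ and $a_h u_{h0}$) and the oscillation terms are matched exactly; this is routine once the integration by parts over the two cells is organized correctly, but it requires bookkeeping consistent with the definition~\eqref{def:estimator} of $R_h$.
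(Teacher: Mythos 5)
Your proposal follows essentially the same route as the paper's proof: the scaled edge-bubble test function $v_E=\eps^{-1/2}\alpha_E J_h b_E$, the local norm equivalence on $E$, integration by parts over the patch $\omega_E$ to recover the jump, routing the convection term through the $|\cdot|_*$ semi-norm, absorbing the cell residual $R_h$ via the cell-efficiency bound, and closing with the $\eps$-weighted bubble-function scalings. The approach and all key steps match; your proposal is correct.
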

  \begin{proof}
For any edge $E\in \mathscr{E}_h$, let $b_E$ denote the edge-bubble function described in~\citep[p. 1771]{verfurth2005robust}.
          We first take 
          \[
          v_E=\eps^{-1/2}\alpha_E J_h b_E.
          \]
   Since $R_h$ belongs to a finite dimensional space on $T$, once again we rely on a local equivalence of norms to obtain
   \begin{align}
   &\int_E  J_h \ v_E  \ ds
   \lesssim   
   \int_E \jump{\eps \bn \cdot \wgrad \uh}  \ v_E  \ ds =   -\int_E \jump{\eps \bn \cdot \wgrad \eh}  \ v_E  \ ds. \label{eq:eff3}
   \end{align}
Let ${\omega_E}\subset \TTh$ denote the compact support of $E$ and $v_{\omega_E}$ denote the extension of edge bubble function to ${\omega_E}$.
    \begin{align}
    &-\int_E \jump{\eps \bn \cdot \wgrad \eh}  \ v_E  \ ds
    =
     - \int_{\omega_E} \div \eps(\wgrad u - \wgrad \uh)  v_{\omega_E} \ dx \   \nonumber\\
   &  - \     \int_{\omega_E} \eps(\wgrad u - \wgrad \uh) \cdot \nabla v_{\omega_E} \ dx \  \nonumber\\
     &  \pm\  \int_{\omega_E} \big(\wdiv(\bb u) -\wdiv(\bb \uh) + a u- a_h\uhz \big)\nabla v_{\omega_E}  \ dx. \   \nonumber\\
     &=  \int_{\omega_E} osc(f,a)v_{\omega_E} dx    
     -\  \int_{\omega_E} \big(\wdiv(\bb\eh) + a\ehz \big) v_{\omega_E}  \ dx
        \nonumber\\ 
    &+   \int_{\omega_E} R_h v_{\omega_E} \ dx   - \int_{\omega_E} \eps \wgrad \eh \cdot \nabla v_{\omega_E} \ dx    \nonumber\\
       &\le \big(osc(f,a) +|\bb \ehz|_* + \norm{a}_{\infty}\norm{\ehz}\big)\norm{v_{\omega_E}}_{\omega_E} + \sum_{T\in\omega_E}\alpha_T\eta_{T,1}\alpha_T^{-1}\norm{v_{\omega_E}}_T\nonumber\\
       & + \norm{\eps^{1/2}\wgrad \eh}_{\omega_E}\norm{\eps^{1/2}\nabla v_{\omega_E}}_{\omega_E} 
    \end{align}   

      Using the properties of the edge bubble function
      \[
      \norm{\sigma b_E}^2_{\omega_E} \lesssim
      \eps^{1/4}\alpha_E^{1/2}  \norm{\sigma }^2_{E} ,
      \]
      and
      \[
      \normmm(\sigma b_E)^2_{\omega_E} \lesssim
           \eps^{1/4}\alpha_E^{-1/2}  \norm{\sigma }^2_{E} ,
      \]   
         the assertion follows.   
          \end{proof} 
          \section{Numerical Results}\label{sec:numerical}       

       In this section we present the results of the following benchmark problems to test the performance of the estimator. The numerical implementation has been realized by using the C++ software library deal.II~\cite{dealII82,BangerthHartmannKanschat2007}.  
       Our sequence of adaptively refined rectangular meshes is constructed by selecting those elements for refinement which possess the top $25\%$ of the largest local indicators $\eta_T$. Since we are using rectangular meshes, local grid refinement inevitably leads to irregular meshes, i.e., not every edge
       of a cell is also a complete edge of its neighboring cell. Consistent with our implementation, we restrict this irregularity to one-irregular meshes, that is, any edge of a cell is shared by at most two cells on
       the other side of the edge.
       
     All the experiments were performed on the unit square $\Omega=(0,1)^2$ with the initial grid consisting of $16 \times 16 $ elements, using polynomial degree $k=2,3$. We remark that these examples have been previously investigated in~\cite{ayuso2009discontinuous,CHEN2017107}.
     
     Since the norm used in proving the reliability and efficiency of the estimator involves the energy norm~\eqref{eq:energy-norm} and the unusual norm $|\bb(u-\uhz)|_*$, for the numerical experiments, we calculate $|\bb(u-\uhz)|_*$, by using the upper bound $\eps^{-1/2}\norm{u-\uhz}$ which has been described in~\citep[Remark 3.5]{schotzau2009robust}.

        \subsection{Boundary Layer Benchmark}\label{ex_1}
                \begin{figure}[hbt!]
                \centering                                                
                          \includegraphics[width=.32658\textwidth]{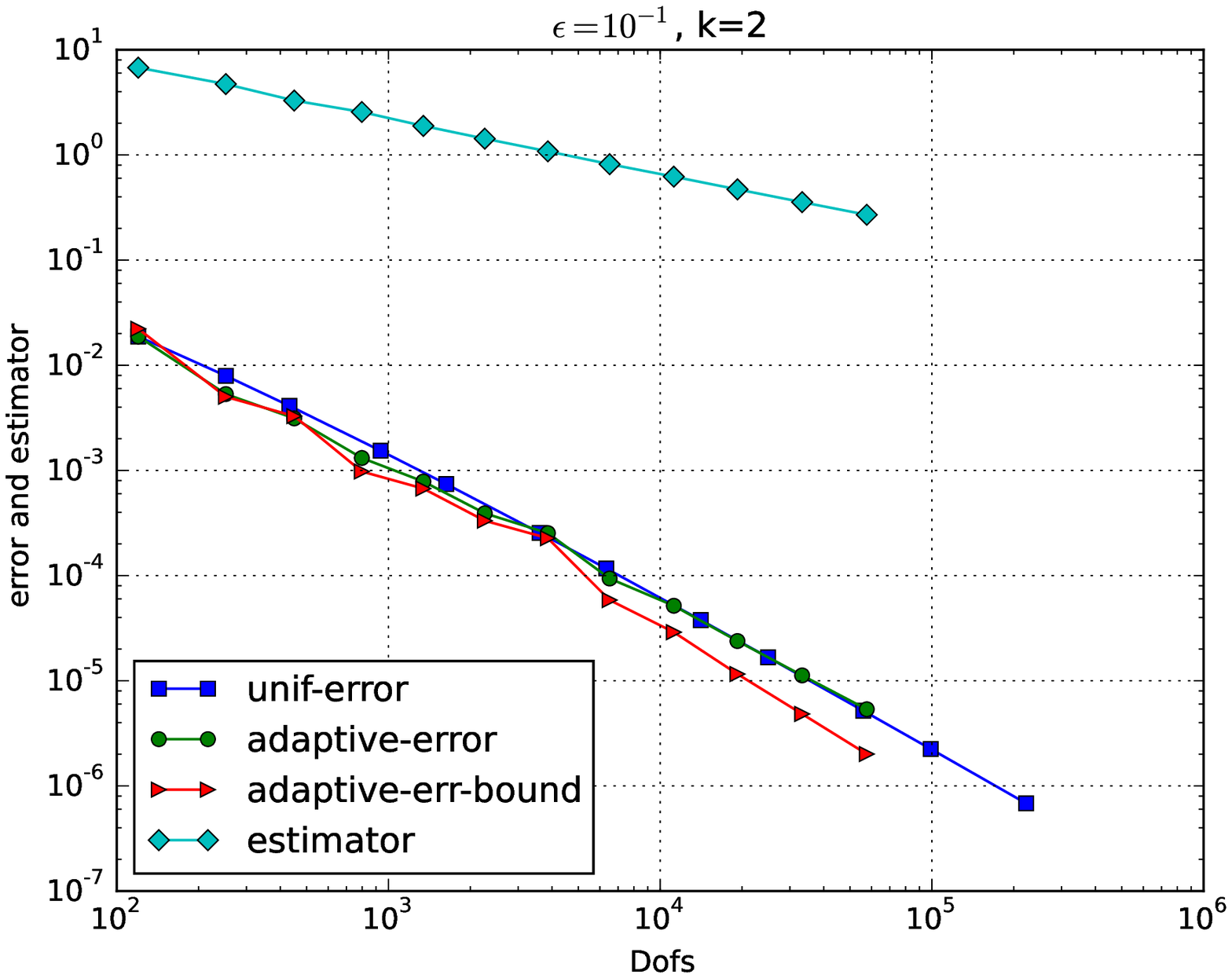}
                           \includegraphics[width=.32658\textwidth]{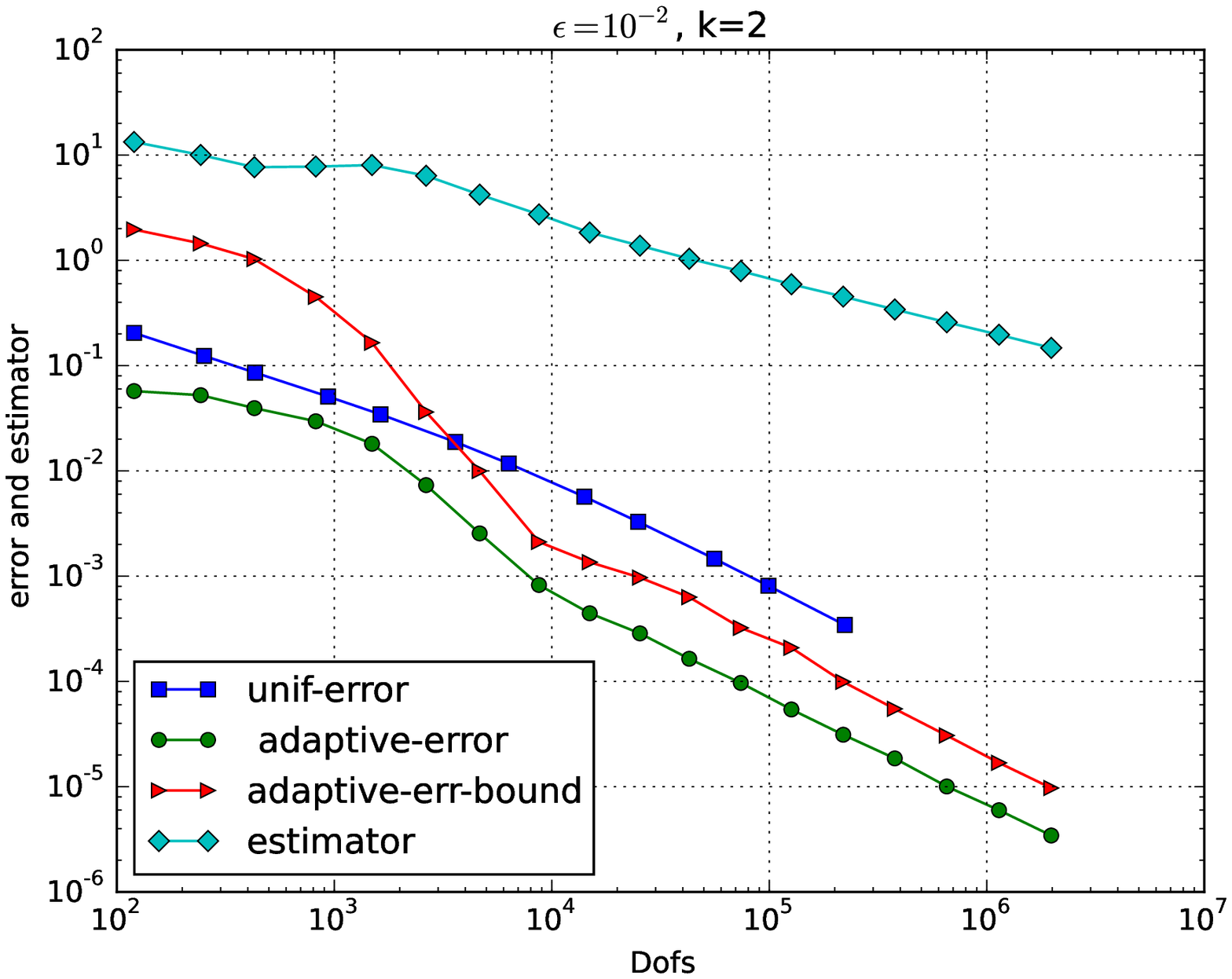} 
                           \includegraphics[width=.32658\textwidth]{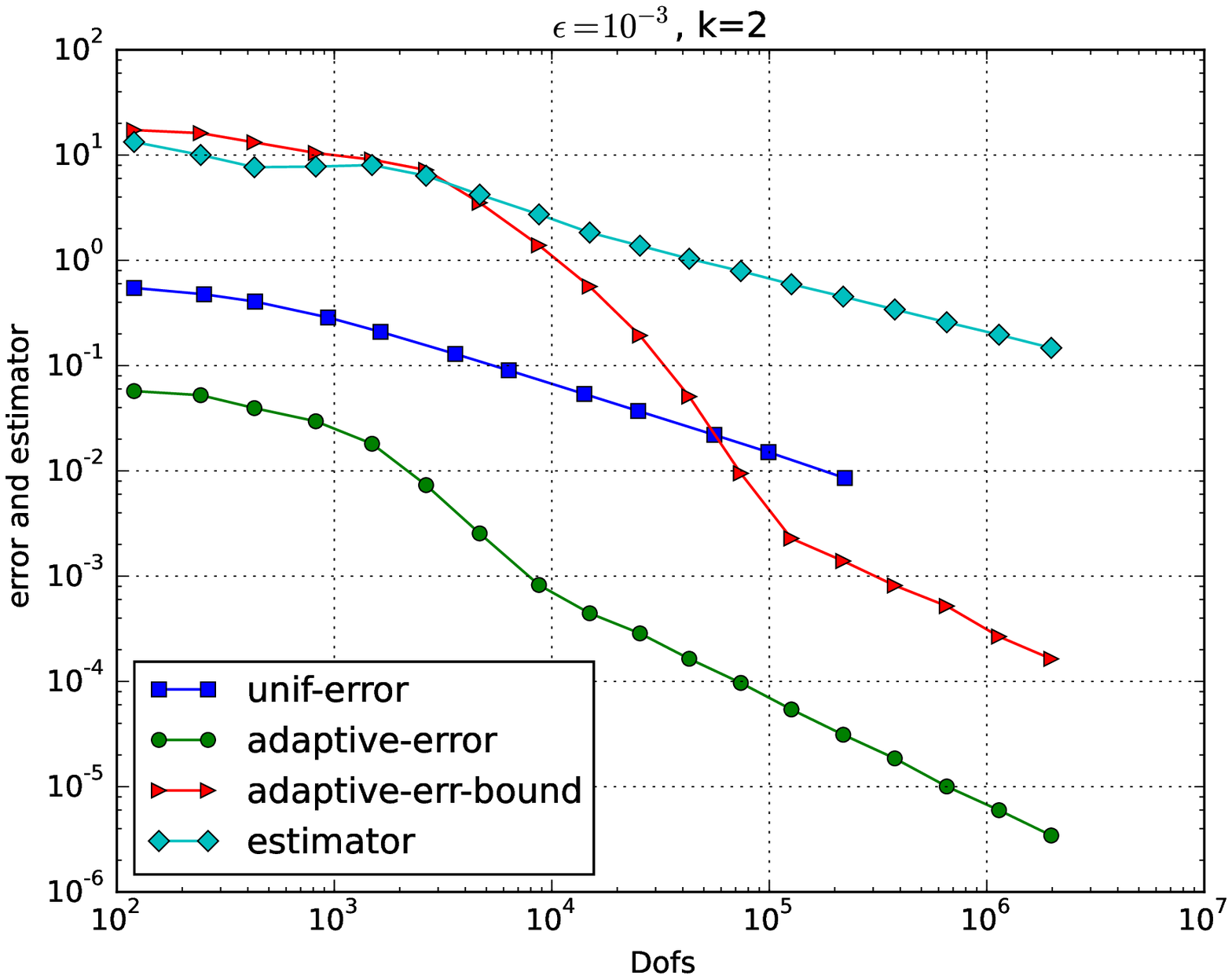}                    
           \caption{Boundary Layer Benchmark: Convergence history for $\eps=10^{-1}, \ 10^{-2}, \ 10^{-3}$ and $k=2$.}                                   
                                                          \label{fig:convergence-history-p2}
                                                         \end{figure}
                        
                                \begin{figure}[!hbt]
                             \centering            
                                 \includegraphics[width=.32\textwidth]{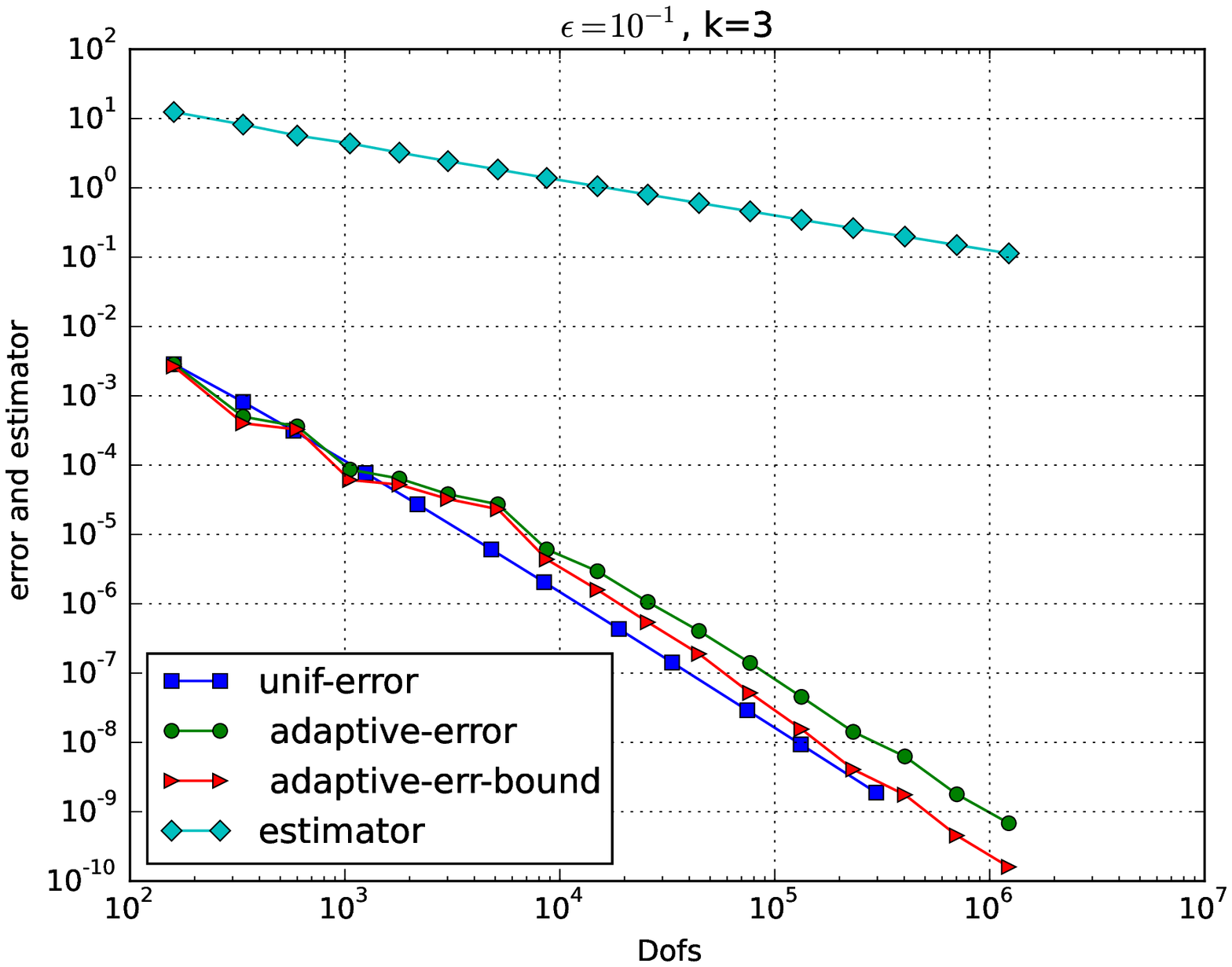}
                                    \includegraphics[width=.32\textwidth]{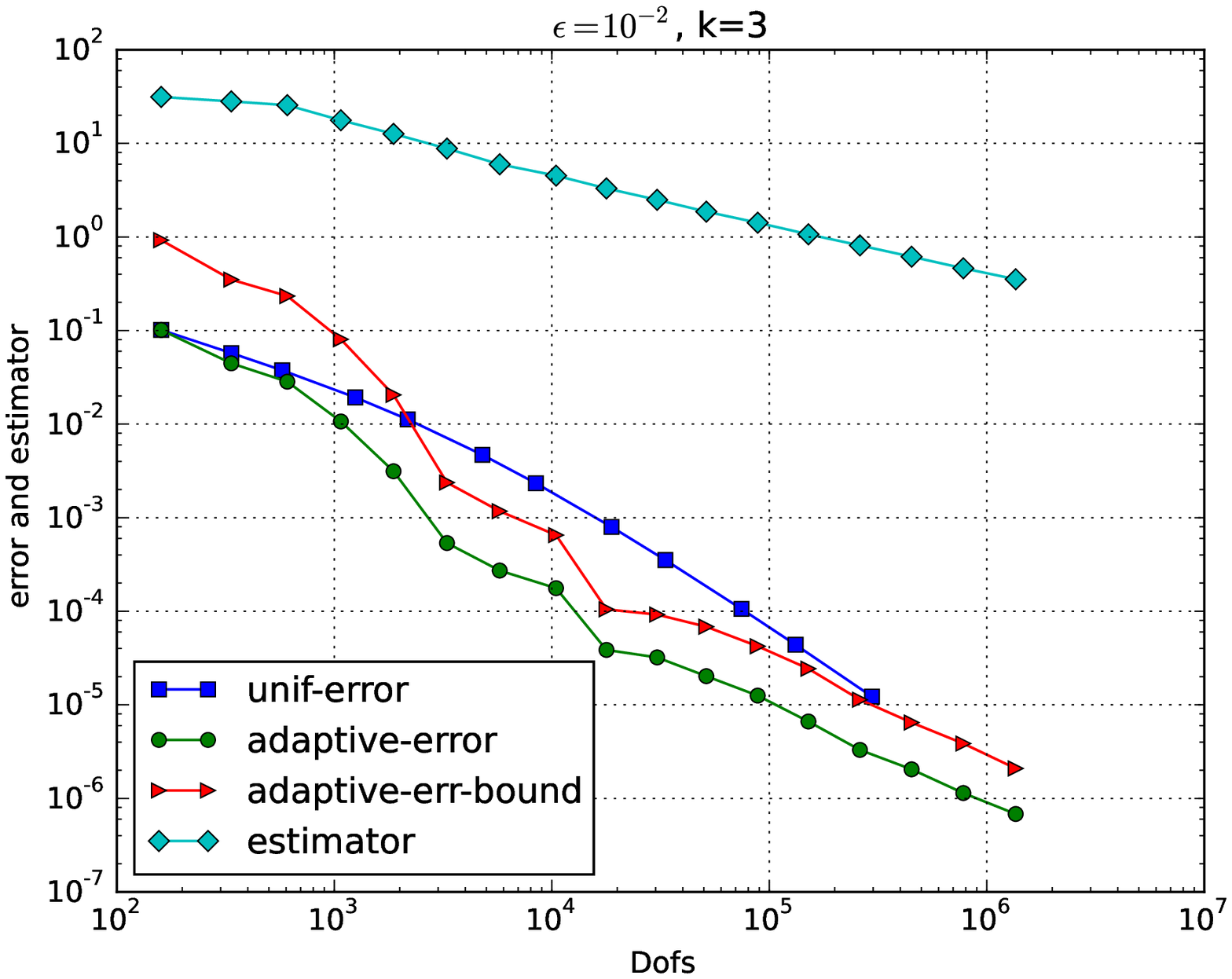} 
                            \includegraphics[width=.32\textwidth]{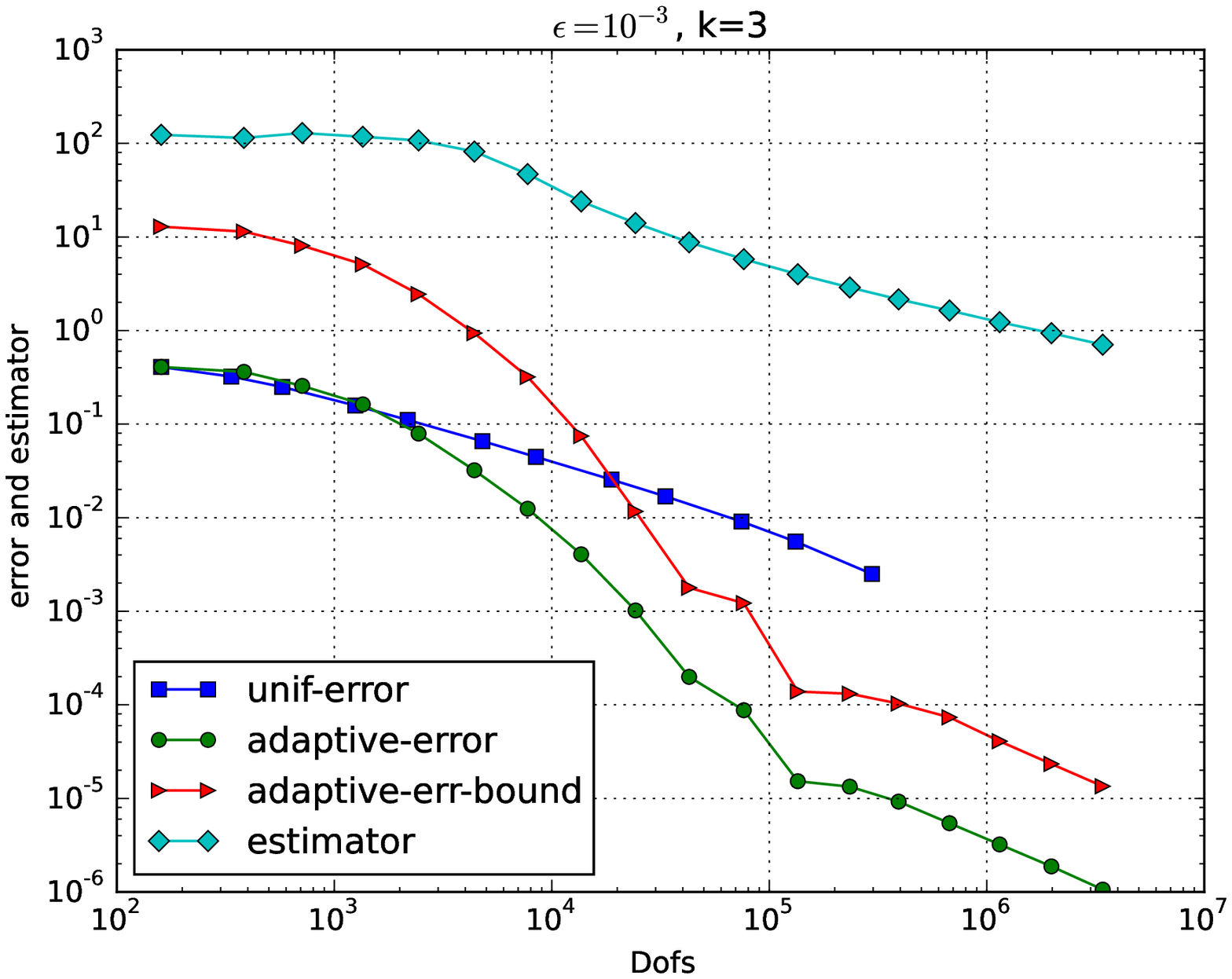}   \caption{Boundary Layer Benchmark: Convergence history for $\eps=10^{-1}, \ 10^{-2}, \ 10^{-3}$ and $k=3$.}                                                 
             
                                                            \label{fig:convergence-history-p3}
                                                          \end{figure} 
      
                                      \begin{figure}[!hbt]
                                   \centering            
                                        \includegraphics[width=.32\textwidth]{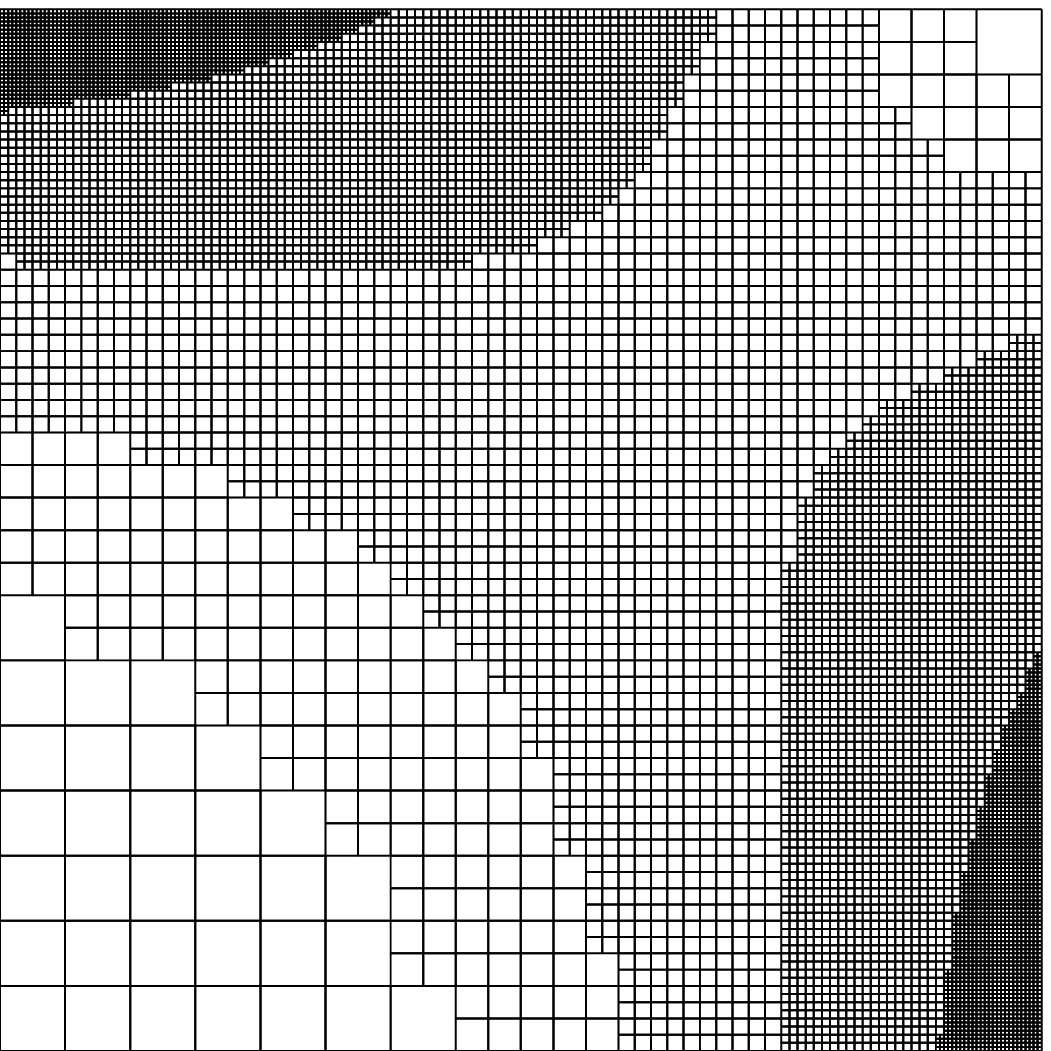} \
                          \includegraphics[width=.32\textwidth]{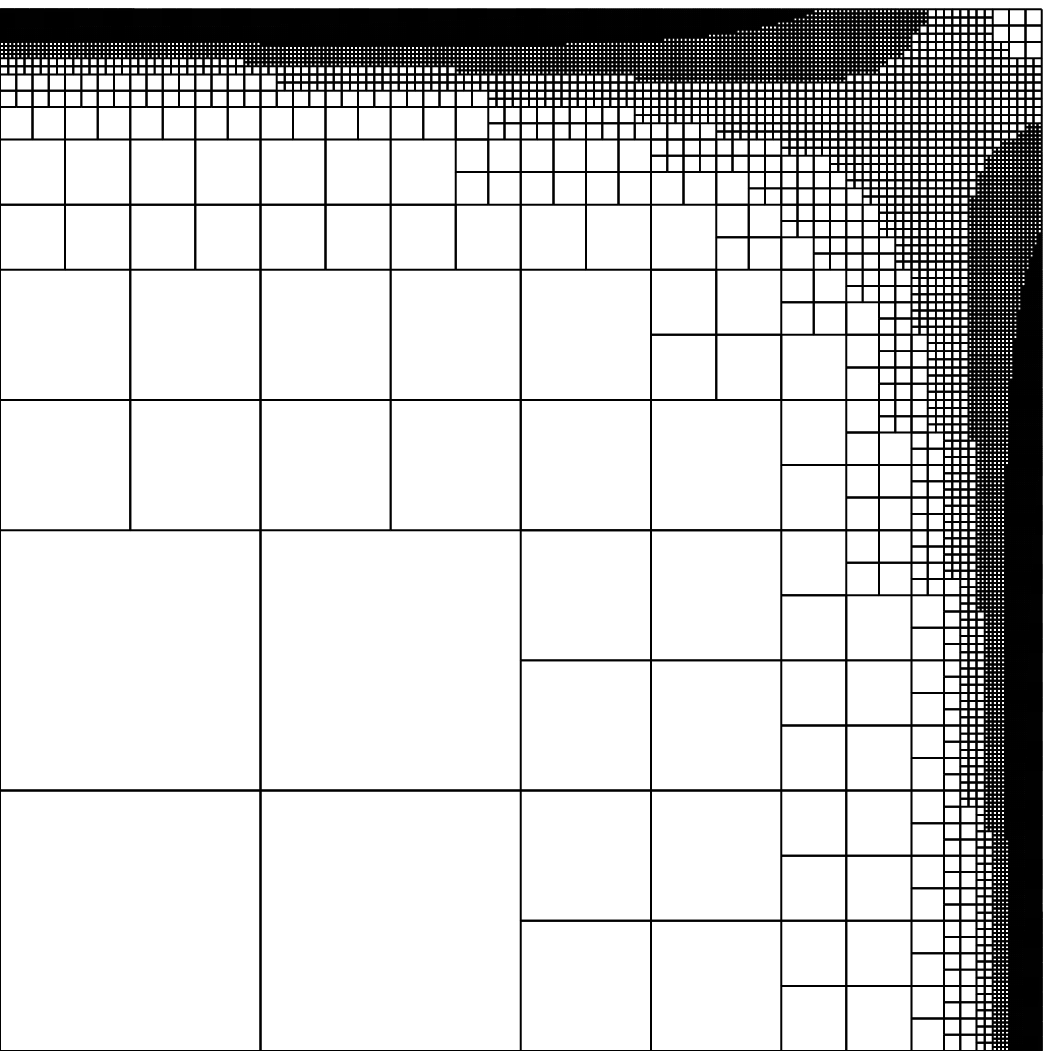} \    
                          \includegraphics[width=.32\textwidth]{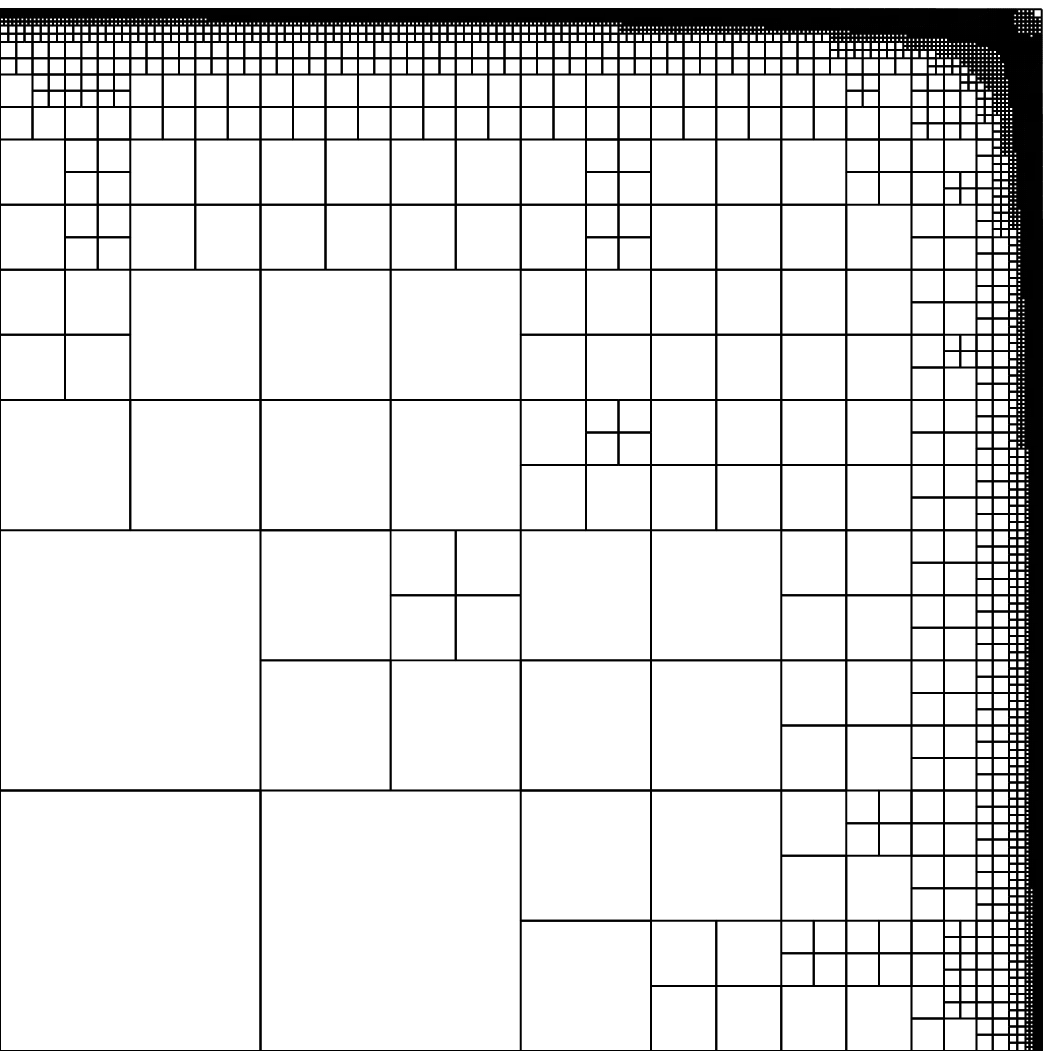}                                                  
                     \caption{Boundary Layer Benchmark: Refined meshes after 11 levels of refinement for $\eps=10^{-1},$ {\it{(left)}} $10^{-2}$, {\it{(center)}}  $10^{-3}$ {\it{(right)}} and $k=3$.} 
                                                                  \label{fig:refined-meshes-blayer}
                                                                \end{figure}                                                                                                           
       We take $\bb=(1,1)^T$ and $a=0$ with the diffusive coefficient varying between $10^{-3}$ to $10^{-1}$. We pick the boundary conditions and the right-hand side $f(x,y)$ so that the analytical solution to~\eqref{eq:cd} is 
                              \begin{align*}
                              u(x,y)= x + y\Big(1 - x  \Big)  + \frac{e^{-\eps^{-1}}- e^{{-(1-x)(1-y)\eps^{-1}}}}{1-e^{-\eps^{-1}} }.
                              \end{align*}
                              
                               It is well known that when $\eps$ is small, the solution develops boundary layers at $x=1$ and $y=1$. These layers have width $\mathcal{O}(\eps)$. 
In Figures~\ref{fig:convergence-history-p2} and~\ref{fig:convergence-history-p3}, we demonstrate the performance of our estimator for $\eps =10^{-1}, \ 10^{-2},\ 10^{-3}$ using $k=2$ and $k=3$ respectively against the total degrees of freedom. In the same figures, we also plot the ``true" energy error $\normmm(u-\uh)$ on adaptively refined meshes denoted by the curve ``adaptive-error" and the curve ``adaptive-err-bound". The latter serves as a computable upper bound for $|\bb \ehz|_*$ (see Remark~\ref{rem:bound}).
We compare the performance of ``adaptive-error" curve with the ``true" energy error computed on uniformly refined meshes depicted by the curve ``unif-error" plotted in the same figures.

As expected the estimator always overestimates the ``true" energy norm thereby confirming the reliability of the estimator. Furthermore, after the boundary layers are sufficiently resolved for each choice of $\eps$, the asymptotic regime is achieved. 
We remark that through the adaptive mesh refinement, we can achieve the expected convergence rates particularly for the intermediate regime $\eps=10^{-3}$, where the poor numerical behavior was observed on uniformly refined meshes. See~\citep[Example 2]{LinYeZhaZhu18} and~\citep[Example 2]{ayuso2009discontinuous}. This phenomena is numerically verified in~Figures~\ref{fig:convergence-history-p2} and~\ref{fig:convergence-history-p3} for the uniform error curves. We notice that for $\eps=10^{-1}$, the adaptive error and uniform error curves are almost indistinguishable for $k=2$ and certainly the uniform error curve outperforms the adaptive error curve for $k=3$. The reason for this observation is that for $\eps=10^{-1}$, it takes only a few levels of uniform refinement for the mesh size to be fine enough to resolve the layer of width $10^{-1}$.

In Figure~\ref{fig:refined-meshes-blayer}, we present the adaptively refined meshes after 11 levels of refinement for $k=3$. We notice a pronounced refinement along the lines $x=1$ and $y=1$ which suggests that our estimator accurately detects the boundary layers and is able to resolve them. We also notice that for $\eps=10^{-1}$, the width of the layer is ``large" enough to be resolved within just a few levels of refinement. 
                    \subsection{Internal Layer Benchmark}\label{ex_2}
             \begin{figure}[hbt!]
                                                                   \centering
                                                                              
                                                           \includegraphics[width=.42\textwidth]{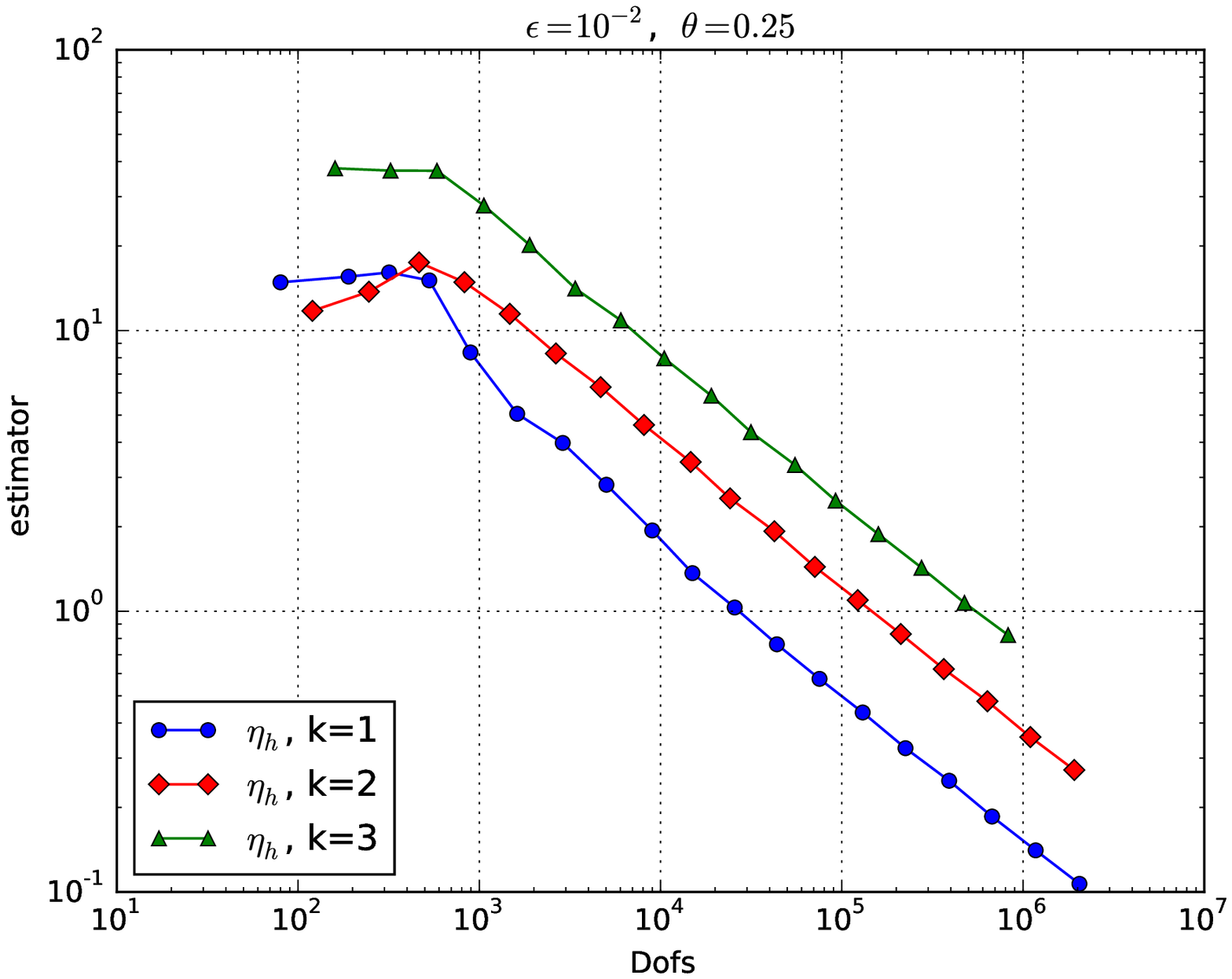}      
                                                                               \includegraphics[width=.42\textwidth]{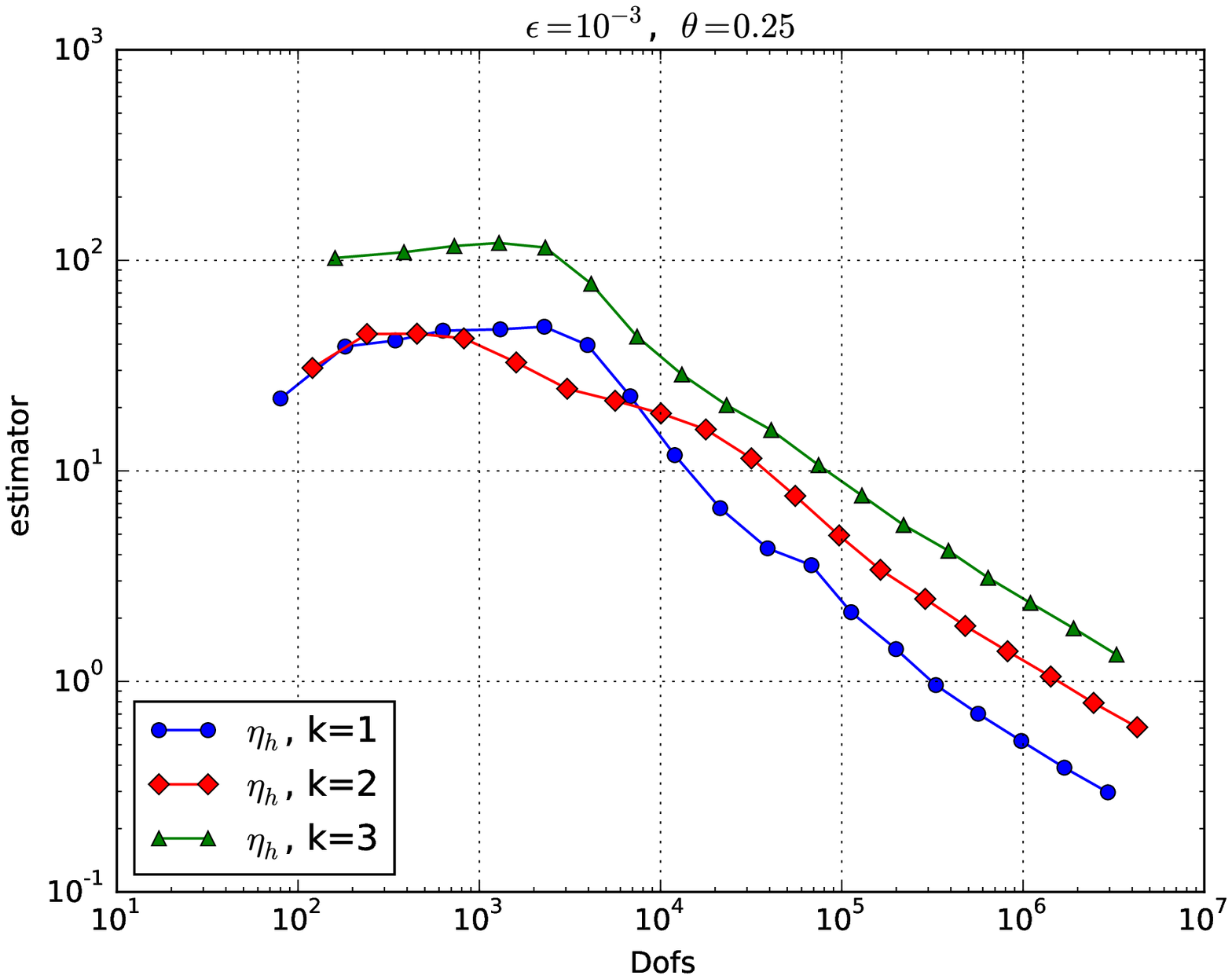}

                                            \caption{Internal Layer Benchmark:~Convergence behavior for $\eps =\ 10^{-2},$ {\it{(left)}} $\eps= 10^{-3}$ {\it{(right)}} using $ k=3 $.}\label{fig:internal-layer}
                                          \end{figure}              
                        \begin{figure}[hbt!]
                                     \centering                                        
                                     \includegraphics[width=.4\textwidth]{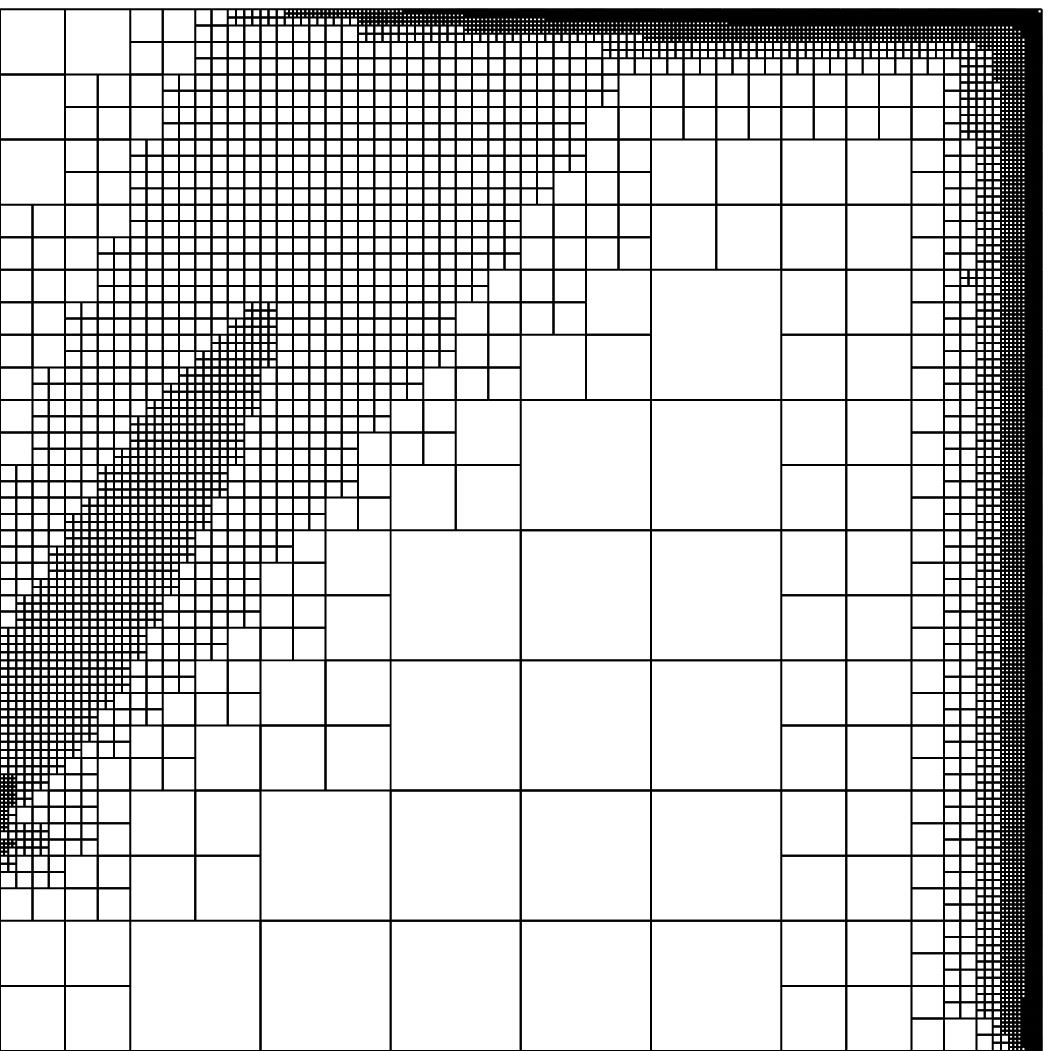} \quad
                                     \includegraphics[width=.4\textwidth]{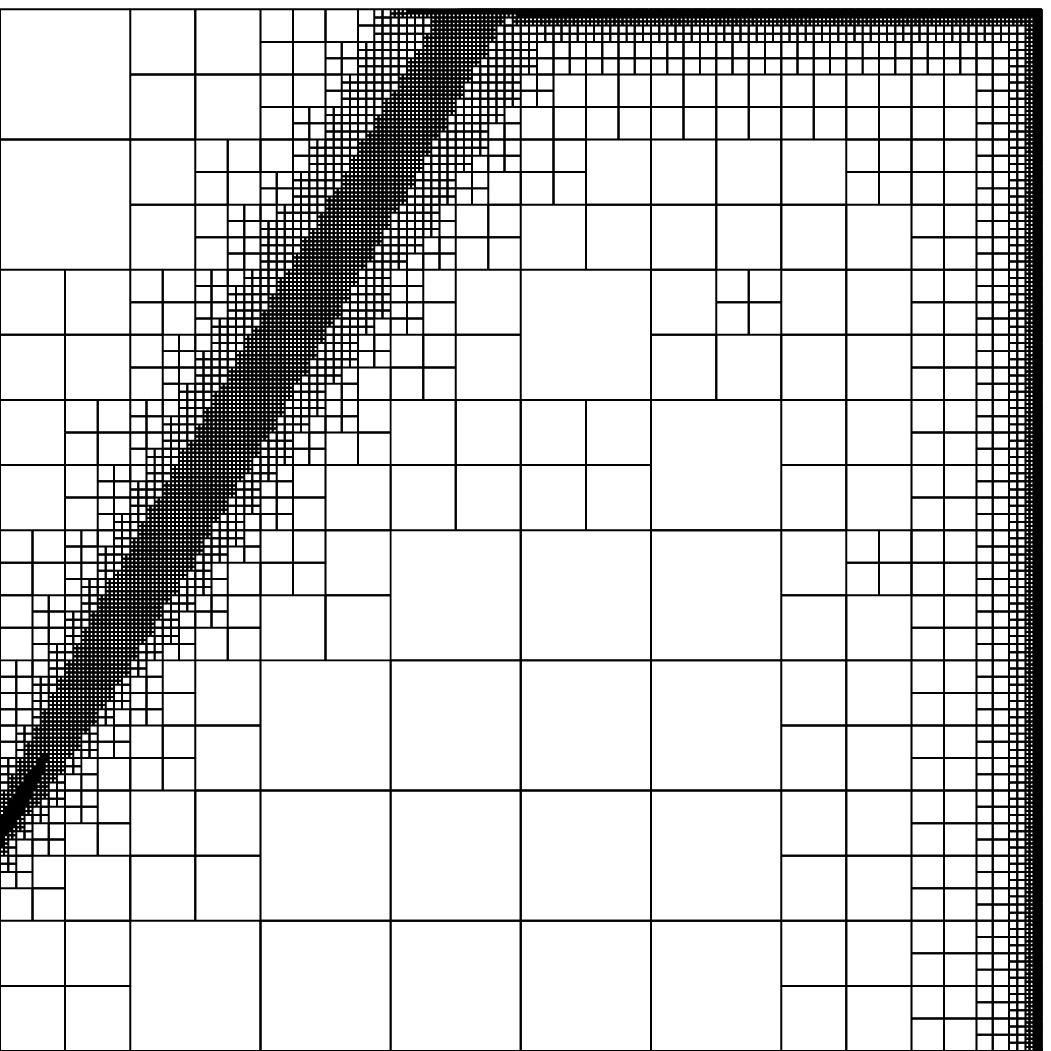} \
                  \caption{Internal Layer Benchmark:~Adaptively refined  after 11 levels of refinement for $\eps=10^{-2}$ {\it{(left)}} and $\eps=10^{-3}$ {\it{(right)}} using $ k=3 $..}
                                                      \label{fig:ilayer-mesh-refinement}
                                                          \end{figure}    
               
                    In this example, we present the performance of the estimator in the presence of internal layers.
                     We choose $\bb= (\half, \frac{\sqrt{3}}{2})$, $a=0$, $f(x,y)=0$ and the Dirichlet boundary conditions are chosen as:
                            \[
                            u(x,y)=\begin{cases}
                            1 \text{ on } [0,1]\times \{0\},\\
                            1 \text{ on } \{0\} \times [0,\frac{1}{5}],\\
                            0 \text{ otherwise. }
                            \end{cases}
                            \]
                    Since the boundary data is discontinuous at the inflow boundary, an internal layer occurs across the domain. 
                    In the absence of the exact solution to this problem, we present the convergence of the estimator for the cases $\eps=10^{-2}$ and $\eps=10^{-3}$ in Figure~\ref{fig:internal-layer}. In Figure~\ref{fig:ilayer-mesh-refinement}, we present the meshes which are adaptively refined after 11 levels of refinement for the same values of $\eps$.
                    As expected, we observe the strong refinement concentrated along the internal layer as shown in demonstrating the performance of the estimator in successfully detecting the internal layer.
                    
   \section*{Acknowledgment}
   The author would like to thank Sara Pollock for her valuable advice and fruitful discussions related to the weak Galerkin method.                  
   \section*{References}
   
   \bibliography{nss-apnum-wg}
   
   \end{document}